\numberwithin{equation}{section}
\def\Hk1{\mathrm{H}^{k+1}}
\def\HH1{\mathrm{H}^1}
\def\Hexpo{\mathrm{H}^}
\def\L2{\mathrm{L}^2}  
\def\LL{\mathrm{L}^}  
\def\Linf{\mathrm{L}^\infty} 
\def\Hdeux2{\mathrm{H}^2}
\def\c{\mathcal{C}^} 
\def\P{\mathbb{P}^}
\def\omgam{(\Omega,\Gamma)}
\def\d{\mathrm{d}}
\def\nn{\boldsymbol{\mathrm{n}}}
\def\na{\nabla}
\def\div{\mathrm{div}}
\def\dist{\mathrm{dist}}
\def\diff{\mathrm{D}}
\def\ds{\mathrm{d}s}
\def\dx{\mathrm{d}x}
\def\omhh{\Omega_h}
\def\omh1{\Omega_h^{(1)}}
\def\omhr{\Omega_h}
\def\ghh{\Gamma_h}
\def\ghr{\Gamma_h}
\def\ft{F_T}
\def\fte{F_T^{(e)}}
\def\ftr{F_T^{(r)}}
\def\ftre{F_{T^{(r)}}^{(e)}}
\def\ftdeux{\ft^{(2)}}
\def\Ghr{G_h^{(r)}}
\def\Ghdeux{{G}_h^{(2)}}
\def\ahell{a_h^\ell}
\def\Jb{J_b}
\def\Jh{J_h}
\def\Jblifte{J_b^\ell}
\def\Jhlifte{J_h^\ell}
\def\tref{\hat{T}}
\def\trefminissigma{\tref\backslash\hat{\sigma}}
\def\hatsigma{\hat{\sigma}}
\def\te{{T}^{(e)}}
\def\tdeux{{T}^{(2)}}
\def\tr{{T}^{(r)}}
\def\tauh{\mathcal{T}_h^{(1)}}
\def\taudeux{\mathcal{T}_h^{(2)}}
\def\taur{\mathcal{T}_h^{(r)}}
\def\taue{\mathcal{T}_h^{(e)}}
\def\lambdaetoile{\lambda^*}
\def\hatx{\hat{x}}
\def\haty{\hat{y}}
\def\hatv{\hat{v}}
\def\Aomega{\mathbf{A}}
\def\eu{e(\u)}
\def\v{\boldsymbol{v}}
\def\w{\boldsymbol{w}}
\def\ev{e(\v)}
\def\Tr{\mathrm{Tr}}
\def\H{\Hexpovect{1}(\Omega)} 
\def\Ihlifted{\boldsymbol{I}^\ell}
\def\transGhr{\mathcal{G}}
\def\Gh{\mathcal{G}{\transpose}}
\def\muomega{\mu}
\def\lambdaomega{\lambda}
\def\euh{e(\uh)}
\def\f{\boldsymbol{f}}
\def\g{\boldsymbol{g}}
\def\u{\boldsymbol{u}}
\def\e{\mathbf{e}}
\def\Hvect1{\mathbf{\HH1}}
\def\Hexpovect{\mathbf{{H}}^}
\def\Lvect2{\mathbf{{L}}^2}
\def\uh{\boldsymbol{u}_h}
\def\vh{\boldsymbol{v}_h}
\def\Vhd{\boldsymbol{V}_h}
\def\gh{\boldsymbol{g}_h}
\def\Vhlifted{\Vhd^\ell}
\def\uhlifte{\uh^\ell}
\def\vhlifte{\vh^\ell}
\def\eL2{e_{\Lvect2(\Omega)}}
\def\eH1{e_{\mathbf{H}^1_0(\Omega)}}
\def\eLGamma{e_{\Lvect2(\Gamma)}}
\newcommand{\fonction}[5]{\begin{array}[t]{lrcl}#1 :&#2 &\longrightarrow &#3\\&#4& \longmapsto &#5 \end{array}}
\newtheorem{theorem}{Theorem}
\numberwithin{theorem}{section}
\newtheorem{lem}[theorem]{Lemma}
\newtheorem{corollary}[theorem]{Corollary}
\newtheorem{proposition}[theorem]{Proposition}
\newtheorem{definition}[theorem]{Definition}
\newtheorem{remarque}[theorem]{Remark}
\newtheorem{remark}[theorem]{Remark}
\newtheorem{ex}[theorem]{Example}
\def\R{\mathbb{R}}      
\def\I{\mathrm{I_d}} 
\def\transpose{^\mathsf{T}}
\def\Ih1{\mathcal{I}^{(1)}}
\def\Ihlifte{\mathcal{I}^\ell}
\newcommand{\blue}[2][black]{{\textcolor{#1}{#2}}} 
\def\restriction#1#2{\mathchoice
              {\setbox1\hbox{${\displaystyle #1}_{\scriptstyle #2}$}
              \restrictionaux{#1}{#2}}
              {\setbox1\hbox{${\textstyle #1}_{\scriptstyle #2}$}
              \restrictionaux{#1}{#2}}
              {\setbox1\hbox{${\scriptstyle #1}_{\scriptscriptstyle #2}$}
              \restrictionaux{#1}{#2}}
              {\setbox1\hbox{${\scriptscriptstyle #1}_{\scriptscriptstyle #2}$}
              \restrictionaux{#1}{#2}}}
\def\restrictionaux#1#2{{#1\,\smash{\vrule height .8\ht1 depth .85\dp1}}_{\,#2}}
\title{Using curved meshes to derive a priori error estimates for a linear elasticity problem with robin boundary conditions}
\author{Joyce Ghantous\footnote{IMB, UMR 5251, Univ. Bordeaux; 33400, Talence, France. Inria Bordeaux
Sud-Ouest, Team MEMPHIS; 33400, Talence, France, \texttt{\url{joyce.ghantous@inria.fr}}.} }
\begin{document}

\maketitle

\begin{abstract}
This work concerns the numerical analysis of the linear elasticity problem with a Robin boundary condition on a smooth domain. A finite element discretization is presented using high-order curved meshes in order to accurately discretize the physical domain. The primary objective is to conduct a detailed error analysis for the elasticity problem using the vector lift operator, which maps vector-valued functions from the mesh domain to the physical domain. Error estimates are established, both in terms of the finite element approximation error and the geometric error, respectively associated to the finite element degree and to the mesh order. These theoretical a priori error estimates are validated by numerical experiments in $2$D and~$3$D.
\end{abstract}

\textbf{Key words:}
Linear elasticity problem, Robin boundary condition, Lagrange finite element method, high order curved meshes, geometric error, \textit{a priori} error estimates.

\medskip

\textbf{AMS subject classification:} 74S05, 65N15, 65N30, 65G99.

\section{Introduction}
\paragraph{Motivation.} 
This work is part of a broader research initiative focusing on the study of vibration properties of mechanical parts subjected to intense and variable rotational regimes. Specifically, it is interested in these parts' vibration properties when they are surrounded by thin surface layers-resulting from corrosion or specialized industrial treatments. The ultimate objective is to improve the understanding of these mechanical structures and optimize their design using shape optimization techniques. A critical first step towards achieving this goal is performing a detailed error analysis of the problem’s solution.
\medskip

This paper serves as an intermediate yet essential step towards the numerical analysis of an eigenvalue problem for elastic structures coated with a very thin layer of constant thickness. The domain and solution of the considered problem can be approximated {using an asymptotic expansion}: the thin layer is modeled by adapted boundary conditions (see e.g. \cite{Gvial,vial-these,haddar-these,ref-ventcel}). In other words, the approximated domain is not surrounded by a thin layer, but it is equipped with second order boundary conditions such as {Ventcel conditions}, also known as {generalized Robin conditions} (see \cite{Ventcel-1956,Ventcel-1959}). \medskip

\blue{As an initial step toward the goal of deriving error estimates for the linear elasticity problem with Ventcel boundary conditions, we consider in this work the case involving Robin boundary conditions. The Robin conditions in linear elasticity provides a versatile framework for modeling complex boundary interactions. In the context of the vibration of structures (e.g., turbine blades or aircraft wings), these conditions are commonly employed to represent interfaces with damping mechanisms or elastic (spring-like) supports. They capture energy dissipation and mechanical constraints at the boundary. There exist many studies in the literature on a priori error estimates for linear elasticity problems with various boundary conditions, such as Dirichlet, Neumann, and mixed formulations; see, for example, \cite{gatica2006priori, mora2020apriori, stewart1998tutorial, rannacher1980finite}. Additionally, numerous works have addressed a posteriori error estimates for linear elasticity problems under these boundary conditions; see, for instance, \cite{barrios2011posteriori,carstensen1998posteriori}. }

\paragraph{State of the art and main results.} In this context, we aim to consider non-polygonal domains and more precisely smooth domains. It will permit to consider
practical cases (with high order Ventcel boundary conditions) at a later date, while also ensuring the regularity of the solution to the problem under consideration. Consequently, under suitable hypotheses, the elasticity-Robin problem admits a unique regular solution (see \cite{ciarlet-elasticity-volume-3,duvant2012inequalities}). However, a problem arises here where the physical domain $\Omega$ and the mesh domain $\omhh$ differ. Indeed, any mesh of the domain will not exactly fit $\Omega$ and there will be a gap between $\Omega$ and $\omhh$, called the \textit{geometric error}. As discussed and rigorously studied in \cite{elliott,art-joyce-1,D1,art-joyce-2}, taking higher order meshes can help lessen this geometric error. Therefore, meshes of order $r$ (\textit{i.e.} with elements of polynomial degree $r$) will be considered to improve the asymptotic behavior of the geometric error with respect to the mesh size $h$. 
\medskip

The main objective of this work is  to establish an a priori error estimate related to the linear elasticity problem with a Robin boundary condition, on meshes of order $r\ge1$, using the $\P k$  Lagrange conformal finite element method, with $k\ge1$. To estimate the error between the discrete solution and the exact one, defined on different domains, we use a \textit{lift operator} to map the discrete solution onto the physical domain. Throughout the years, many scalar lift operators are defined in many works, like in~\cite{elliott,nedelec,dubois,Lenoir1986,Bernardi1989}.  \blue{The use of a lift has so far been restricted to scalar problems; no application to vector-valued settings, such as elasticity, has been reported in the literature. For instance, in \cite{elliott,ed}, a scalar lift is defined and employed to handle bulk-surface coupled problems. In \cite{Dz88,D1,D2}, surface lifts are defined in a scalar case of Laplace–Beltrami problems defined on a surface. In the latter case, the definition of the surface lift is quite straight forward, relying solely on a composition with the orthogonal projection from the discrete surface onto the continuous one.}

\medskip

\blue{In the present work, in order to proceed with the error analysis, a vectorial lift operator is defined generalizing the scalar {lift} given in~\cite{art-joyce-1}. Thus, we can lift vector-valued function from the discrete to the physical domain using this \textit{vectorial lift} defined in the upcoming section. Additionally, this operator preserves the essential properties of its scalar version, in particular the trace property, which is crucial for lifting adequately the discrete weak formulation of the considered problem. }

\medskip

Hence, we investigate the dependency of the computed errors with respect to the mesh size~$h$, the order of the finite element method $k$ and the order of the considered mesh $r$. We proceed with a non-isoparametric approach, \textit{i.e.} when taking distinct orders $k$ and $r$. {To the best of our knowledge, such an approach has not been explored in the context of elasticity problems.} It was considered in \cite{D1,D2} for the error analysis of the Laplace-Beltrami problem and in \cite{D3} for treating the spectral Laplace-Beltrami problem. Recently, in \cite{art-joyce-1,Jaca}, it was also used to estimate the error of the Poisson-Ventcel problem and in \cite{art-joyce-2} for estimating the error of a spectral diffusion problem with Ventcel boundary conditions. For completeness, we highlight the following papers \cite{ed,elliott} as examples of applications of the isoparametric approach, where $k=r$, to estimate a priori errors. 
\medskip

\blue{Here, a linear elasticity problem with Robin conditions is studied leading to a weak formulation with elasticity-related integrals. This problem is discretized using the finite element method and subsequently lifted using the vectorial lift operator producing intricate elasticity-related discrete lifted integrals. This added complexity has a direct impact on the derivation of the geometric error estimators. An additional aspect that requires careful treatment is the use of appropriate vector-valued norms. Hence, error estimates are derived with respect to the relevant parameters in the~$\Hexpovect{1}(\Omega)$ norm, as well as in the $\Lvect2(\Omega)$ and $\Lvect2(\Gamma)$ norms.}\medskip

\blue{To support the theoretical results, numerical simulations are conducted to validate the theoretical a priori error estimates, on both $2$D and~$3$D domains. Two noteworthy phenomena are observed. Firstly, the errors on the quadratic meshes are better than expected: a gain of one convergence order with respect to the theory is depicted. This aligns with previous findings in the study of scalar problems \cite{art-joyce-1,art-joyce-2,D2}. What is particularly interesting is that this behavior extends to vector-valued problems. Secondly, a loss in convergence rates is observed on cubic meshes, in line with previous findings in~\cite{art-joyce-1,art-joyce-2}. The novelty of the present analysis lies in the extension of this phenomenon to the surface $\Lvect2$ error, which was not reported in the scalar case. This aspect is further examined in a dedicated section.}

\paragraph{General notations.}
Firstly, let us introduce the notations that we adopt in this paper. Throughout this paper, $\Omega$ is a nonempty bounded connected open subset of $\R^{d}$ $(d=2,3)$ with a smooth (at least $\c2$) boundary~$\Gamma:=\partial{\Omega}$. The unit normal to~$\Gamma$ pointing outwards is denoted by~$\nn$. We denote respectively by $\LL 2(\Omega)$ and  $\LL 2(\Gamma)$ the usual Lebesgue spaces endowed with their standard norms on $\Omega$ and $\Gamma$. Moreover, for any integer $p \geq 0$, $\Hexpo{p}(\Omega)$ denotes the usual Sobolev space endowed with its standard norm. We also consider the Sobolev spaces $\Hexpo{p}(\Gamma)$ on the boundary as defined e.g.~in \cite[\S 2.3]{ventcel1}. In the following, spaces of vector functions will be denoted by boldface letters. Thus, we denote~$\Lvect2(\Omega):=[\L2(\Omega)]^d$ and $\Lvect2(\Gamma):=[\L2(\Gamma)]^d$. Similarly, for any~$p \ge 0$, we have~$\Hexpovect{p}(\Omega):=[\Hexpo{p}(\Omega)]^d$ and ~$\Hexpovect{p}(\Gamma):=[\Hexpo{p}(\Gamma)]^d$. We denote by $\I$ the~$d\times d$ identity matrix and by $\P p$ the set of polynomials in~$\R^d$ of order $p$ or less. For two square real valued matrices $A$ and~$B$ of same size~$d\times d$,~$A:B$ denotes the term by term product~$A:B = \Tr(A^{\transpose}B) = \sum_{1 \le i, j\le d} \, a_{ij} \, b_{ij}$, also known as the Frobenius inner product, where $\Tr$ denotes the matrix trace. 
Lastly, for any smooth vector field~$\u$,~$\na \u$ is the matrix whose~$i^{{\rm  th}}$ row is the gradient of the $i^{{\rm  th}}$ component of~$\u$. For any smooth matrix function~$S$ defined on $\Omega$ with values in $\R^{d \times d}$, with rows~$S_j$ for~$j=1, \dots, d$, the divergence of $S$ is $\div S : \Omega \to \R^d$ given by $\div(S)_j=\div(S_j)$, for~$j=1, \dots, d$.

\paragraph{Paper organization.} \blue{Section~\ref{sec:steps} outlines the main steps of this work, beginning with the definition of the system and its weak formulation. Section~\ref{sec:mesh-lift} introduces the high-order curved meshes and the vector lift operator, which form the key ingredients of the error analysis (see Appendices~\ref{mesh:appendix} and~\ref{appendix:lift} for further details). In Section~\ref{sec:FEM}, we define a Lagrangian finite element space and present the discrete formulation of the linear elasticity problem, together with their lifted versions on the physical domain~$\Omega$. The main theoretical result, providing the a priori error estimates, is then stated. Section~\ref{sec:numerical-ex-elas} is dedicated to numerical simulations on both $2$D and $3$D domains, illustrating the influence of the geometric order~$r$ and the finite element degree~$k$ on the convergence rates. Finally, Section~\ref{sec:error-estimation-proof} presents the detailed proof of the error estimates.}

\section{The main stepping stones}
\label{sec:steps}
\subsection{The linear elasticity problem with a Robin boundary condition}
We recall that  $\Omega$ is a nonempty bounded connected domain in $\R^{d}$, $d=2, 3$, with a smooth boundary~$\Gamma := \partial\Omega$. 
Here, we assume that $\Omega$ is an elastic body and we consider an isotropic elastic medium with {\it Lamé coefficients} $\muomega > 0$ and $\lambdaomega >0$, which are considered as constants for more simplicity in this paper (possible extension to variable coefficients will be discussed later on see Remark \ref{rem:lame-coeff-cst}). We define its associated {\it elastic or Hooke tensor} $\Aomega$ by,
\begin{equation}
    \label{eq:Hooke-tensor}
    \Aomega\xi := 2 \muomega \xi + \lambdaomega \Tr(\xi) \I,
\end{equation}
for all symmetric matrices $\xi\in \R^d\times\R^d$. We refer to \cite{elasticity-1,ciarlet-elasticity-volume-3} for more details.

Next, we define the {\it strain tensor} for any vector field $\u =(u_i)_{i=1,...,d} \in \Hexpovect{1}(\Omega)$ by, 
\begin{equation}
\label{eq:e(u)}
    \eu := \frac{1}{2}(\na \u + (\na \u){\transpose}),
\end{equation}
which is the symmetric part of the Jacobian matrix $\na \u$. 
\medskip

Considering sufficiently regular source terms $\f$ and $\g$, the elasticity problem that we will focus on is the following:
\begin{equation}
\label{sys:elast-Robin}
\arraycolsep=2pt
\{
\begin{array}{rcll}
-\div(\Aomega \eu) &=& \f & \text{in } \Omega, \\
\u + \Aomega \eu \, \nn &=& \g & \text{on } \Gamma.
\end{array}
\right.
\end{equation}

The variational formulation of Problem~\eqref{sys:elast-Robin} is obtained, using the integration by parts formula, and it is given by,
\begin{equation}
\label{fv_faible}
  \left\lbrace
      \begin{array}{l}
       \mbox{find }  \u \in \Hexpovect{1}(\Omega), \ \mbox{ such that,}  \\
       a(\u,\v) = l(\v), \,  \forall \ \v \in \Hexpovect{1}(\Omega),
      \end{array}
    \right.
\end{equation}
where the bilinear form $a$, defined on $\Hexpovect{1}(\Omega) \times \Hexpovect{1}(\Omega)$, is given by,
\begin{align*}
    a(\u,\v) & :=  \int_{\Omega} \Aomega (\eu) : \ev \, \dx + \int_{\Gamma} \u\cdot \v \, \ds  =  \int_{\Omega} \Aomega (\eu) : \na \v \, \dx  + \int_{\Gamma} \u\cdot \v \, \ds,
\end{align*} 
where the latter equation is a consequence of the symmetry property of the strain tensor in \eqref{rk:Aomega-sym} as detailed in Section \ref{sec:error-estimation-proof}. 
Additionally, the linear form $l$, defined on $\Hexpovect{1}(\Omega)$, is given by,
\begin{equation}
\label{eq:lin-form-elast}
     l(\v) := \int_{\Omega} \f\cdot \v \,\dx + \int_{\Gamma} \g\cdot \v \, \ds.
\end{equation}

The following theorem asserts the well-posedness of Problem \eqref{fv_faible}.This result is a direct consequence of the Lax-Milgram theorem, relying on the continuity of the symmetric bilinear form $a$ and its coercivity in the space $\Hexpovect1(\Omega)$, the latter being ensured by Korn's inequality (see \cite[Th. 1.8.2]{ciarlet-elasticity-volume-3} and \cite[\S 3.4]{duvant2012inequalities}). 

\begin{theorem}
\label{th_existance_unicite_u}
Let $\Omega$ and  $\Gamma= \partial \Omega$ be as stated previously. 
Let $\f \in \Lvect2(\Omega)$, $\g \in \Lvect2(\Gamma)$. Then there exists a unique solution 
$\u \in \Hexpovect{1}(\Omega)$ to Problem (\ref{fv_faible}). Additionally, there exists $c>0$ such that the following inequality holds,
$$
    \|\u\|_{\Hexpovect{1}(\Omega)} \le c ( \|\f\|_{\Lvect2(\Omega)} + \|\g\|_{\Lvect2(\Gamma)}).
$$
\end{theorem}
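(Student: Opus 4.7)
The plan is to apply the Lax--Milgram theorem to the variational formulation~\eqref{fv_faible}, which requires verifying that the bilinear form $a$ is continuous and coercive on $\Hexpovect{1}(\Omega)$, and that the linear form $l$ is continuous. The existence, uniqueness, and stability bound then follow automatically from the theorem.

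\textbf{Continuity of $a$ and $l$.} First, I would show that $a$ is bilinear and symmetric, which follows directly from the definition of $\Aomega$ in~\eqref{eq:Hooke-tensor} and of $e(\u)$ in~\eqref{eq:e(u)}. Continuity of $a$ is a routine application of the Cauchy--Schwarz inequality: the volume integral is bounded by $C(\muomega,\lambdaomega) \|\na\u\|_{\Lvect2(\Omega)}\|\na\v\|_{\Lvect2(\Omega)}$, while the boundary integral is bounded by $\|\u\|_{\Lvect2(\Gamma)}\|\v\|_{\Lvect2(\Gamma)}$ and then controlled by $\|\u\|_{\Hexpovect{1}(\Omega)}\|\v\|_{\Hexpovect{1}(\Omega)}$ using the continuity of the trace operator $\Hexpovect{1}(\Omega) \to \Hexpovect{1/2}(\Gamma) \hookrightarrow \Lvect2(\Gamma)$, which relies on the smoothness of $\Gamma$. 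Continuity of $l$ on $\Hexpovect{1}(\Omega)$ follows in the same way from the assumptions $\f \in \Lvect2(\Omega)$, $\g \in \Lvect2(\Gamma)$ combined with the trace inequality.

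\textbf{Coercivity.} This is the main obstacle, since there is no Dirichlet portion of the boundary to cancel rigid motions. I would argue in two stages. First, using the identity $\Aomega\xi:\xi = 2\muomega\,\xi:\xi + \lambdaomega(\Tr\xi)^2$ for symmetric $\xi$, together with $\muomega,\lambdaomega>0$, I get $\Aomega e(\u):e(\u) \ge 2\muomega\,|e(\u)|^2$, so the volume part controls $\|e(\u)\|_{\Lvect2(\Omega)}^2$. Second, I would invoke Korn's second inequality on $\Omega$ (valid since $\Gamma$ is smooth), which asserts the existence of $C>0$ such that $\|\u\|_{\Hexpovect{1}(\Omega)}^2 \le C\bigl(\|e(\u)\|_{\Lvect2(\Omega)}^2 + \|\u\|_{\Lvect2(\Omega)}^2\bigr)$. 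To eliminate the $\Lvect2(\Omega)$ term, I would use a standard compactness/contradiction argument combined with the $\Lvect2(\Gamma)$ boundary term: if coercivity failed, one would extract a sequence $\u_n$ with $\|\u_n\|_{\Hexpovect{1}(\Omega)} = 1$ whose strain and boundary trace tend to zero; the limit would be a rigid displacement (vanishing strain) with zero trace on $\Gamma$, hence zero, contradicting $\|\u_n\|_{\Hexpovect{1}(\Omega)}=1$ after passing to a strongly $\Lvect2$-convergent subsequence. This yields $\alpha>0$ with $a(\u,\u) \ge \alpha\|\u\|_{\Hexpovect{1}(\Omega)}^2$.

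\textbf{Conclusion.} With continuity and coercivity established, Lax--Milgram delivers a unique solution $\u \in \Hexpovect{1}(\Omega)$ to~\eqref{fv_faible}. Testing with $\v=\u$, using coercivity on the left and continuity of $l$ on the right together with the trace inequality, gives the a priori bound $\alpha \|\u\|_{\Hexpovect{1}(\Omega)}^2 \le a(\u,\u) = l(\u) \le C(\|\f\|_{\Lvect2(\Omega)} + \|\g\|_{\Lvect2(\Gamma)})\|\u\|_{\Hexpovect{1}(\Omega)}$, from which the stated inequality follows with $c=C/\alpha$.
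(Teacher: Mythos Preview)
Your proposal is correct and follows exactly the approach the paper indicates: the paper states (without a detailed proof) that the result is a direct consequence of the Lax--Milgram theorem, with continuity of the symmetric bilinear form $a$ and coercivity ensured by Korn's inequality, citing \cite[Th.~1.8.2]{ciarlet-elasticity-volume-3} and \cite[\S 3.4]{duvant2012inequalities}. Your write-up simply fleshes out these ingredients, including the standard compactness argument that couples Korn's second inequality with the boundary $\Lvect2(\Gamma)$ term to obtain coercivity in the absence of a Dirichlet condition.
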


\subsection{Curved mesh and lift operator definitions}
\label{sec:mesh-lift}

Throughout this section, we briefly recall the definition of curved meshes of geometrical order~$r\ge 1$ of the domain~$\Omega$ and give the main associated notations. We refer to \cite[\S 3]{art-joyce-1} for details and rigorous definitions (in particular concerning the mentioned transformations). Afterwards, we define the lift of a vector-valued function defined on the mesh domain onto the physical one, generalizing the definition given in~\cite{art-joyce-1} for a scalar function.

\paragraph{Curved mesh $\taur$ of order $r$.}
We denote $\tref$ the reference simplex of dimension~$d$. Let $\tauh$ be a polyhedral mesh of $\Omega$ made of simplices of dimension $d$, denoted~$T$ (triangles or tetrahedra). The mesh $\tauh$ is chosen as {quasi-uniform} and henceforth {shape-regular} (see  \cite[Definition 4.4.13]{quasi-unif} for more details). Denote $\fte$ the exact transformation, that maps the reference simplex $\tref$ into an exact mesh element consequently forming a mesh that exactly fits onto $\Omega$ as detailed in Appendix~\ref{mesh:appendix}. Then,~$\fte$ is interpolated as a polynomial of order $r \ge 1$ in the classical $\P r$-Lagrange basis on $\tref$. The interpolant is denoted by $\ftr$, which is a $\c1$-diffeomorphism and is in $\c {r+1}(\tref)$ (see \cite[chap. 4.3]{PHcia}). \medskip

Hence, the curved mesh of order~$r$ is denoted by~$\taur := \{ \tr:= \ftr(\tref); \, T \in \tauh \}$.  Additionally, $\omhh := \cup_{\tr \in  \taur}\tr$ denotes the mesh domain and~$\ghh:= \partial \omhh$ is its boundary. 
\paragraph{Functional lift.}
A function defined on the curved mesh domain $\omhh$ can be lifted onto the physical domain $\Omega$, following the definitions first introduced in the 1970's (e.g., \cite{nedelec, scott, Lenoir1986, Bernardi1989}). In this paper, we employ the well-defined lift transformation $\Ghr$ introduced in \cite{art-joyce-1} to perform this operation. The transformation $\Ghr$ is defined piece-wise on each mesh element such that, 
\begin{equation}
\label{Ghr-ref}
    \Ghr : \omhr \to \Omega; \quad {\Ghr}_{|_{\ghr}} = b,
\end{equation}
where~$b$ is the orthogonal projection on the domain boundary $\Gamma$ defined in Proposition \ref{tub_neigh_orth_proj_prop}. We also mention that by construction, $\Ghr$ is globally continuous and piece-wise differentiable on each mesh element. We refer to Appendix \ref{appendix:lift} for the full expression of $\Ghr$. \medskip

\blue{Using this transformation, the volume lift operator for vector-valued functions is given in the following definition.}

\begin{definition}[Volume lift]
\label{def:liftvolume}
    To any vector-valued function $\uh \in \Lvect2(\omhr)$ is associated its lift, denoted~$\uhlifte \in \Lvect2(\Omega)$, given by, 
    $$
        \uhlifte \circ \Ghr = ({\uhlifte}_1 \circ \Ghr, \dots , {\uhlifte}_d \circ \Ghr) := \uh.
    $$
Similarly, to any vector-valued function $\u \in \Lvect2(\Omega)$, we can define its inverse lift, denoted~$\u^{-\ell} \in \Lvect2(\omhr)$, given by, 
    $$
       \u^{-\ell} := \u \circ \Ghr = (\u_1 \circ \Ghr, \dots , \u_d \circ \Ghr).
    $$
\end{definition}

\blue{
In a similar manner, a surface lift operator for vector-valued functions is introduced in the following definition.
\begin{definition}[Surface lift]
\label{def:liftsurface}
To any vector-valued function $\uh \in \Lvect2(\ghr)$, we associate its surface lift $\uh^L \in \Lvect2(\Gamma)$ defined by,
$$
\uh^L \circ b = ({\uh^L}_1 \circ b, \dots , {\uh^L}_d \circ b) := \uh,
$$
where $b: \ghr \rightarrow \Gamma$ is the orthogonal projection defined in Proposition~\ref{tub_neigh_orth_proj_prop}. Likewise, to any vector-valued function~$\u \in \Lvect2(\Gamma)$ is associated its inverse lift $\u^{-L} \in \Lvect2(\ghr)$ given by,
$$
\u^{-L} := \u \circ b = (\u_1 \circ b, \dots , \u_d \circ b).
$$
\end{definition}
}

\blue{
\begin{remarque}
This vector lift operator satisfies a trace property, which states that the volume and surface lifts coincide on~the boundary. In other words, for any vector field $\uh \in \Lvect2(\omhr)$, the following identity holds:
$$
\big( {{\uh}_i}_{|_{\ghr}} \big)^L = {\big( {\uhlifte}_i \big)}_{|_{\Gamma}}, \quad \forall \, i = 1, \dots, d.
$$
This follows from the fact that the restriction of the transformation $\Ghr$ to $\ghr$ coincides with the orthogonal projection $b$, i.e., ${\Ghr}_{|_{\ghr}} = b$. Consequently, the surface lift $\vh^L$ (resp. the inverse lift~$\v^{-L}$) will now simply be denoted by $\vhlifte$ (resp. $\v^{-\ell}$).
\end{remarque}
}

\subsection{The finite element approximation}
\label{sec:FEM}
\blue{Next, we introduce the finite element approximation of Problem~\eqref{fv_faible} using a $\mathbb{P}^k$ Lagrange finite element method (see~\cite{EG,PHcia}). This formulation is then lifted onto the physical domain~$\Omega$, leading to a lifted discrete problem whose solution approximates that of the continuous problem~\eqref{fv_faible}. For further details in the scalar setting, we refer to~\cite[\S5]{art-joyce-1}.}

\paragraph{The discrete formulation.} 
Recall that $\tref$ denotes the reference simplex of dimension~$d$. Given a curved mesh $\taur$,~$T$ denotes a curved mesh element. Let~$k \geq 1$, the $\P k$-Lagrangian finite element vector space is given by, 

$$
\Vhd := \{ \chi \in [\c0(\overline{\omhh})]^d; \ \chi_{|_T}= \hat{\chi} \circ (\ftr )^{-1} , \ \hat{\chi} \in [\P k(\hat{T})]^d, \ \forall \ T \in \taur \}.
$$

Given $\f \in \Lvect2(\Omega)$ and $\g \in \Lvect2(\Gamma)$ the right hand side functions of Problem~\eqref{sys:elast-Robin}, we define the discrete linear form $l_h$ on $\Vhd$, given as follows for $\vh \in \Vhd$,
\begin{equation}
\label{eq:lin-form-discrete}
     l_h(\vh) := \int_{\omhh} (\f^{-\ell}\Jh) \cdot \vh \,\dx + \int_{\ghh} (\g^{-\ell}\Jb) \cdot \vh \, \ds,
\end{equation}
where $\Jh$ (resp. $\Jb$) is the Jacobian of the lift transformation $\Ghr$ in \eqref{Ghr-ref} (resp. the orthogonal projection $b$ onto $\Gamma$ defined in Proposition \ref{tub_neigh_orth_proj_prop}) and $\f^{-\ell}$ (resp. $\g^{-\ell}$) is the inverse lift of $\f$ (resp.~$\g$).

\medskip
%
%

The approximation problem is given by,
\begin{equation}
\label{fvh-EV}
\left\lbrace
      \begin{array}{l}
       \mbox{find } \uh \in \Vhd, \ \mbox{ such that,}  \\
         a_h(\uh,\vh ) = l_h(\vh ), \,  \forall \,\vh  \in \Vhd,
      \end{array}
    \right. 
\end{equation}
where $a_h$ is the following bilinear form, defined on $\Vhd \times \Vhd$, for~$\uh, \vh \in \Vhd$,
\begin{equation*}
    a_h(\uh,\vh ) :=  \int_{\omhh} \Aomega ( \euh ) :  \na \vh \, \dx + \int_{\ghh} \uh \cdot \vh \, \ds.
\end{equation*}

\begin{remark}
Since $a_h$ is bilinear symmetric positive definite on a finite dimensional space, then there exists a unique solution $\uh \in \Vhd$ to the discrete problem~\eqref{fvh-EV}.
\end{remark}

\paragraph{The lifted discrete formulation.} 
To define the lifted discrete formulation, we rely on the lifted finite element vector space defined given as follows, $\Vhlifted:= \{ \vhlifte ; \ \vh \in \Vhd \}.$ To begin with, we need to point out that the lifted finite element space $\Vhlifted$ is embedded in the Sobolev space~$\Hexpovect{1}(\Omega)$. Thus, the equations \eqref{pass_grad_volume-Elasticity}, \eqref{pass_fct_scalaire_volume-Elasticity} and \eqref{pass_fct_scalaire_surface-Elasticity}, where integrals on $\omhh$ (resp. $\ghh$) are expressed in terms of integrals on $\Omega$ (resp. $\Gamma$), can be applied for lifted finite element vector functions in the following. We refer to Section~\ref{sec:error-estimation-proof} for exhaustive details. \medskip

We define the lifted bilinear form $\ahell$, on $\Vhlifted \times \Vhlifted$, throughout,
$$
a_h(\uh,\vh ) = \ahell(\uhlifte,\vhlifte ), \quad \forall \, \uh,\vh  \in \Vhd.
$$  
By applying \eqref{pass_grad_volume-Elasticity}, and \eqref{pass_fct_scalaire_surface-Elasticity}, then the expression of $\ahell$ is given as follows for all $\uhlifte, \vhlifte \in~\Vhlifted$, 
\begin{multline*}
    \ahell(\uhlifte ,\vhlifte  ) =\muomega \int_{\Omega} (\na\uhlifte   \, \transGhr) : (\na\vhlifte   \, \transGhr) \, \frac{1}{\Jhlifte} \, \dx 
      + \muomega \int_{\Omega} (\na\uhlifte  \,  \transGhr){\transpose} : (\na\vhlifte  \,  \transGhr) \, \frac{1}{\Jhlifte} \, \dx \\
      + \lambdaomega \int_{\Omega} \Tr (\na\uhlifte  \,  \transGhr)  \, \Tr(\na\vhlifte  \,  \transGhr) \, \frac{1}{\Jhlifte} \, \dx + \int_{\Gamma}   \uhlifte   \cdot \vhlifte  \,   \frac{ 1}{\Jblifte} \, \ds,
\end{multline*}
where $\transGhr$ is a matrix arising from a change of variable using the vector lift operator. The full expression of $\transGhr$ is given in \eqref{notation:G-elasticity}. \medskip

Using \eqref{pass_fct_scalaire_volume-Elasticity} and \eqref{pass_fct_scalaire_surface-Elasticity}, we notice that the linear forms $l$ and $l_h$ given respectively in~\eqref{eq:lin-form-elast} and in \eqref{eq:lin-form-discrete} satisfy the following equation for all $\uhlifte, \vhlifte \in~\Vhlifted$,
\begin{multline*}
     l_h(\vh) = \int_{\omhh} (\f^{-\ell}\Jh) \cdot \vh \,\dx + \int_{\ghh} (\gh^{-\ell}\Jb) \cdot \vh \, \ds 
     = \int_{\Omega} \f\cdot \vhlifte \,\dx + \int_{\Gamma} \g\cdot \vhlifte \, \ds = l(\vhlifte).
\end{multline*}

Consequently, we define the lifted formulation of the discrete problem~\eqref{fvh-EV} by,
$$
\left\lbrace
      \begin{array}{l}
       \mbox{find } \uhlifte \in \Vhlifted, \ \mbox{ such that,}  \\
         \ahell(\uhlifte,\vhlifte)= l(\vhlifte), \,  \forall \,\vhlifte  \in \Vhlifted.
      \end{array}
    \right. 
$$
This problem is well-posed and admits a unique solution $\uhlifte \in \Vhlifted$, that is the lift of the unique solution $u_h$ of the discrete problem \eqref{fvh-EV}.
\begin{remarque}
    Keeping in mind that $\u$ is the solution of \eqref{fv_faible} and $\uhlifte$ is the lift of the solution of~\eqref{fvh-EV}, we need to point out that, for any $\vhlifte \in \Vhlifted \subset \Hexpovect{1}(\Omega)$, the following equation stands,
\begin{equation}
\label{rem:a=ahell-For-vhell}
  a(\u,\vhlifte) = l(\vhlifte)=  l_h(\vh)= a_h(\uh,\vh) = a^\ell_h(\uhlifte,\vhlifte).
\end{equation} 
\end{remarque}

\begin{remarque}
\label{rem:lame-coeff-cst}
Throughout this paper, the {\it Lamé} coefficients $\muomega > 0$ and~$\lambdaomega >0$ are supposed constant at each point $x$ of $\Omega$. We have to mention that they can be assumed to be variable if we suppose that they are bounded and superior to  a constant $\epsilon>0$. In this case, an additional technical difficulty arises: the {\it Lamé} coefficient associated to the discrete formulation of the problem need to be lifted from $\omhh$ onto $\Omega$. This is not a trivial difficulty to deal with that will not be held here.
\end{remarque}

%
%
%
%
\subsection{Main result: the error estimation theorem}

From this point forward, we consider that the mesh size~$h$ is sufficiently small and that $c$ refers to a positive constant independent of~$h$. Keeping in mind that the domain~$\Omega$, is assumed to be smooth (at least~$\c {k+1}$ regular with $k\ge 1$), we assume that the source terms in problem \eqref{sys:elast-Robin} are more regular:~$\f \in \Hexpovect{k-1}(\Omega)$ and $\g \in \Hexpovect{k-1}(\Gamma)$. Then, the exact solution $\u$ of Problem~\eqref{sys:elast-Robin} is in~$\Hexpovect{k+1}(\Omega)$ satisfying the following classical energy inequality,
\begin{equation}
\label{ineq:energy}
     \|\u\|_{\Hexpovect{k+1} (\Omega)} \le c ( \|\f\|_{\Hexpovect{k-1}(\Omega)} + \|\g\|_{\Hexpovect{k-1}(\Gamma)}).
\end{equation}
We refer to \cite[Th. 6.3-6]{ciarlet2022mathematical} and \cite[\S 2.7]{hansbo2020analysis} for more details.  \medskip

The goal of this paper is to prove the following a priori error estimates, stated as follows.
%
%
%
%
\begin{theorem}
\label{th-error-bound}
Let $\u \in \Hexpovect{k+1}(\Omega)$ be the solution of the variational problem~\eqref{fv_faible} satisfying~\eqref{ineq:energy} and let $\uh \in \Vhd$ be the solution of the finite element formulation~\eqref{fvh-EV}. Then for a sufficiently small $h$, there exists a mesh independent constant $c > 0$ such that, 

\begin{equation}
\label{errh1_errl2}
    \|\u-\uhlifte\|_{\Hexpovect{1}(\Omega)}  \le c ( h^k + h^{r+1/2})
\quad \mbox{ and } \quad
    \|\u-\uhlifte \|_{\Lvect2\omgam } \le c ( h^{k+1} + h^{r+1}),
\end{equation}
where $\uhlifte \in \Vhlifted$ denotes the lift of $\uh$ onto $\Omega$, given in Definition \ref{def:liftvolume} and where the $\Lvect2\omgam$ norm is defined as follows $\|\v\|_{\Lvect2\omgam}^2 := \|\v\|_{\Lvect2(\Omega)}^2 + \|\restriction{\v}{\Gamma}\|_{\Lvect2(\Gamma)}^2,$ for any $\v \in \Lvect2(\Omega)$ such that $\restriction{\v}{\Gamma} \in \Lvect2(\Gamma)$.
\end{theorem}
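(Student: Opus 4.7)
The approach is a Strang-type analysis adapted to the lift framework, since the discrete bilinear form $\ahell$ is defined on $\Omega$ through a geometric transformation yet differs from the continuous form $a$. The first step is to establish that $\ahell$ is uniformly coercive on $\Vhlifted \subset \Hexpovect1(\Omega)$: the pointwise geometric estimates $\|\transGhr - \I\|_{\Linf(\Omega)}$, $\|1/\Jhlifte - 1\|_{\Linf(\Omega)}$ and $\|1/\Jblifte - 1\|_{\Linf(\Gamma)}$, each of order $h^{r+1}$, make $\ahell$ an $O(h^{r+1})$ perturbation of $a$, so Korn's inequality yields coercivity for $h$ small enough. Then, for any $\vhlifte \in \Vhlifted$ and any test $\w_h^\ell \in \Vhlifted$, the identity \eqref{rem:a=ahell-For-vhell} provides the error equation
\begin{equation*}
\ahell(\uhlifte - \vhlifte, \w_h^\ell) = [a(\u,\w_h^\ell) - \ahell(\u,\w_h^\ell)] + \ahell(\u - \vhlifte, \w_h^\ell).
\end{equation*}
Choosing $\w_h^\ell = \uhlifte - \vhlifte$ and applying coercivity reduces the $\Hexpovect1$ bound, via a triangle inequality, to controlling a best-approximation term and a geometric consistency term.

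Taking $\vhlifte$ as the lift of the $\P k$-Lagrange interpolant of $\u^{-\ell}$ on $\taur$, the best-approximation term $\|\u - \vhlifte\|_{\Hexpovect1(\Omega)}$ is bounded by $c h^k \|\u\|_{\Hexpovect{k+1}(\Omega)}$ via classical interpolation on curved elements transported through the lift. For the consistency term, expanding $\ahell(\u,\w_h^\ell)$ and regrouping splits $a - \ahell$ into a volume part, in which each $\muomega$ and $\lambdaomega$ piece of the Hooke contraction is perturbed by $\transGhr - \I$ and by $1/\Jhlifte - 1$, and a surface Robin part perturbed by $1/\Jblifte - 1$. Cauchy--Schwarz with the $\Linf$ bounds of order $h^{r+1}$ yields a volume contribution of order $h^{r+1}\|\u\|_{\Hexpovect1(\Omega)}\|\w_h^\ell\|_{\Hexpovect1(\Omega)}$; the surface contribution, combined with a trace inequality on $\w_h^\ell$, gives the slightly weaker $h^{r+1/2}$ factor (the half-order loss coming from controlling traces of $\Hexpovect1$ functions by their volume norm). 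Collecting these bounds proves the first estimate of \eqref{errh1_errl2}.

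For the $\Lvect2\omgam$ bound I would use an Aubin--Nitsche duality argument. With $\boldsymbol{e}:=\u - \uhlifte$, the dual problem is to find $\boldsymbol{z} \in \Hexpovect1(\Omega)$ such that $a(\v,\boldsymbol{z}) = \int_\Omega \boldsymbol{e}\cdot\v \,\dx + \int_\Gamma \boldsymbol{e}\cdot\v \,\ds$ for every $\v \in \Hexpovect1(\Omega)$. Elliptic regularity for the symmetric elasticity-Robin operator with smooth boundary yields $\boldsymbol{z} \in \Hexpovect2(\Omega)$ with $\|\boldsymbol{z}\|_{\Hexpovect2(\Omega)} \le c \|\boldsymbol{e}\|_{\Lvect2\omgam}$. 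Testing with $\v = \boldsymbol{e}$, inserting the lifted Lagrange interpolant $\boldsymbol{z}_h^\ell$ of $\boldsymbol{z}^{-\ell}$, and reapplying the geometric consistency splitting symmetrically in both arguments, the extra $h$ factor compared to the $\Hexpovect1$ case comes from the $\Hexpovect2$ regularity of $\boldsymbol{z}$ (yielding $\|\boldsymbol{z} - \boldsymbol{z}_h^\ell\|_{\Hexpovect1} \le c h \|\boldsymbol{z}\|_{\Hexpovect2}$) and from the improved $\Lvect2$ interpolation of $\u$, producing the $h^{k+1}+h^{r+1}$ bound after absorbing $\|\boldsymbol{e}\|_{\Lvect2\omgam}$. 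The main obstacle throughout is the vector-valued geometric consistency step: unlike in the scalar Poisson--Ventcel case, the Hooke tensor couples $\muomega\eu$ with $\lambdaomega\Tr(\na\u)\I$, so after the change of variables one must track carefully how $\transGhr - \I$ interacts with both the symmetric gradient and the divergence, and how the boundary Robin term behaves under $\Jblifte$, in order to avoid any loss beyond the target exponents $h^{r+1}$ in the volume and $h^{r+1/2}$ on the surface.
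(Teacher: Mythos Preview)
Your overall architecture (Strang-type splitting for the $\Hexpovect1$ bound, Aubin--Nitsche for the $\Lvect2\omgam$ bound) matches the paper's, but the geometric consistency step contains a genuine error that breaks the argument.

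You assert that $\|\transGhr - \I\|_{\Linf(\Omega)}$ and $\|1/\Jhlifte - 1\|_{\Linf(\Omega)}$ are each of order $h^{r+1}$. They are not: the paper's inequalities \eqref{ineq:Ghr-Id_1/Jh-1} give only $O(h^r)$ for both volume quantities, and this is sharp. Only the \emph{surface} Jacobian satisfies $\|1 - 1/\Jblifte\|_{\Linf(\Gamma)} \le ch^{r+1}$. With the correct $O(h^r)$ bound your volume consistency term becomes $ch^r\|\u\|_{\Hexpovect1(\Omega)}\|\w_h^\ell\|_{\Hexpovect1(\Omega)}$, which yields at best $\|\u-\uhlifte\|_{\Hexpovect1(\Omega)} \le c(h^k + h^r)$, missing the target by a half-power.

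The missing mechanism is localization. The perturbations $\transGhr-\I$ and $1/\Jhlifte-1$ vanish identically on internal elements (see \eqref{JDG}), so the volume consistency integrals live only on the boundary layer $B_h^\ell$. The paper's Proposition~\ref{prop:err-geom} therefore reads
\[
|a(\u,\v)-\ahell(\u,\v)| \le ch^r\|\na\u\|_{\Lvect2(B_h^\ell)}\|\na\v\|_{\Lvect2(B_h^\ell)} + ch^{r+1}\|\u\|_{\Lvect2(\Gamma)}\|\v\|_{\Lvect2(\Gamma)},
\]
and the extra half-power is then recovered from the thin-layer estimate \eqref{h1/2_blh}, namely $\|\na\u\|_{\Lvect2(B_h^\ell)} \le ch^{1/2}\|\u\|_{\Hexpovect2(\Omega)}$. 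So the $h^{r+1/2}$ comes from the \emph{volume} term via this localization, not from the surface term. Your attribution of the half-order loss to a trace inequality on the Robin part is backwards: the trace theorem $\|\v\|_{\Lvect2(\Gamma)}\le c\|\v\|_{\Hexpovect1(\Omega)}$ has an $h$-independent constant, and the surface consistency contributes a clean $h^{r+1}$. The same localization is what produces the full extra power of $h$ in the duality argument for the $\Lvect2\omgam$ estimate (see the treatment of $F_3$ in the proof of Lemma~\ref{lem:Fh}); without it your Aubin--Nitsche step would also stall at $h^r$.
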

%
%
\blue{The errors in \eqref{errh1_errl2} are controlled by two main components: the finite element error, quantified by the interpolation estimate in Proposition \ref{prop:interpolation-ineq} and the geometrical error in Proposition~\ref{prop:err-geom}, which arises from approximating the physical domain with a mesh of order~$r$. In Section \ref{sec:h1-err}, the $\Hexpovect{1}(\Omega)$ error estimate is established, where both the interpolation inequality and the geometric error play a key role. Lastly, in Section \ref{sec:L2-err}, the $\Lvect2$ error estimate is derived using the same key ingredients (the geometric error estimation and the interpolation inequality) along side an Aubin-Nitsche type argument.}

\section{Numerical simulations}
\label{sec:numerical-ex-elas}
\blue{In this section are presented numerical results aimed to illustrate the convergence estimates of Theorem \ref{th-error-bound}. We perform these simulations in the two dimensional and three dimensional cases. The discrete problem~\eqref{fvh-EV} is implemented and solved using the finite element library CUMIN~\cite{cumin}, an open-source code written in Fortran 90, designed for the efficient numerical approximation of partial differential equations.}

\medskip

In dimension $2$, the direct solver MUMPS\footnote{MUltifrontal Massively Parallel sparse direct Solver \url{https://mumps-solver.org/index.php}} is considered allowing fast computations. In dimension $3$, memory requirements imposed a lighter method: a conjugate gradient with Jacobi preconditioning has been used. The tolerances has been set to very low values of $(1{\rm E{-14}})$: this generally allowed to compute accurately the numerical errors up to error values of ${\rm 1 E{-11}}$, which was necessary in order to well capture the convergence asymptotic regimes. 

\medskip

Curved meshes of the domain $\Omega$ of geometrical order~$1\le r \le 3$ have been generated using the software Gmsh
\footnote{\url{https://gmsh.info/}}. All integral computations (either on the physical domain $\Omega$ or on the computational domain~$\omhr$) are performed on the reference simplex using changes of coordinates. These changes of coordinates are made on each element of the considered mesh. This allows to compute numerical errors such as $\|\uhlifte - \u\|_{\Lvect2(\Omega)}$ between the lift $\uhlifte$ of a finite element vector-valued function~$\uh$ defined on $\omhr$ and a vector-valued function~$\u$ defined on the smooth domain $\Omega$. On the reference simplex, high-order quadrature methods are employed, ensuring that the integration error is of smaller magnitude than the approximation errors evaluated in this section. Indeed, it has been consistently verified that the integration errors have a negligible impact on the subsequent numerical results.  \medskip
 
\blue{In addition, all numerical results presented in this section are fully reproducible using the dedicated source codes available in the CUMIN GitLab repository\footnote{\url{https://plmlab.math.cnrs.fr/cpierre1/cumin}}.}

\subsection{The two dimensional case: on the unit disk}

The linear elasticity problem with Robin's boundary condition in \eqref{sys:elast-Robin} is considered on the unit disk ${\rm D(O,1)} \subset \R^2$, with the following Lamé constants, $\lambdaomega  = \muomega =1$. In this example, we consider the following source terms, 
$$
\mathbf{f}(x, y) = \begin{bmatrix} -6 y^2 - x \exp(y) \\ -18 xy - 2 \exp(y) \end{bmatrix} \quad \mbox{and} \quad  \mathbf{g}(x, y) = \begin{bmatrix} 3x^2y^2 + y^4 + xy \exp(y) + 4x \exp(y) \\ 11xy^3 + x^2 \exp(y) + y \exp(y) \end{bmatrix}.
$$
Hence, the analytical solution of \eqref{sys:elast-Robin} is given by, $\u(x,y) =  \begin{bmatrix} x\exp(y) \\ y^3x \end{bmatrix}$, for all $(x,y)$ in ${\rm D(0,1)}$. 

\medskip

\begin{figure}[!h]
\centering
\begin{tabular}{c c} 
    \includegraphics[width=0.27\textwidth]{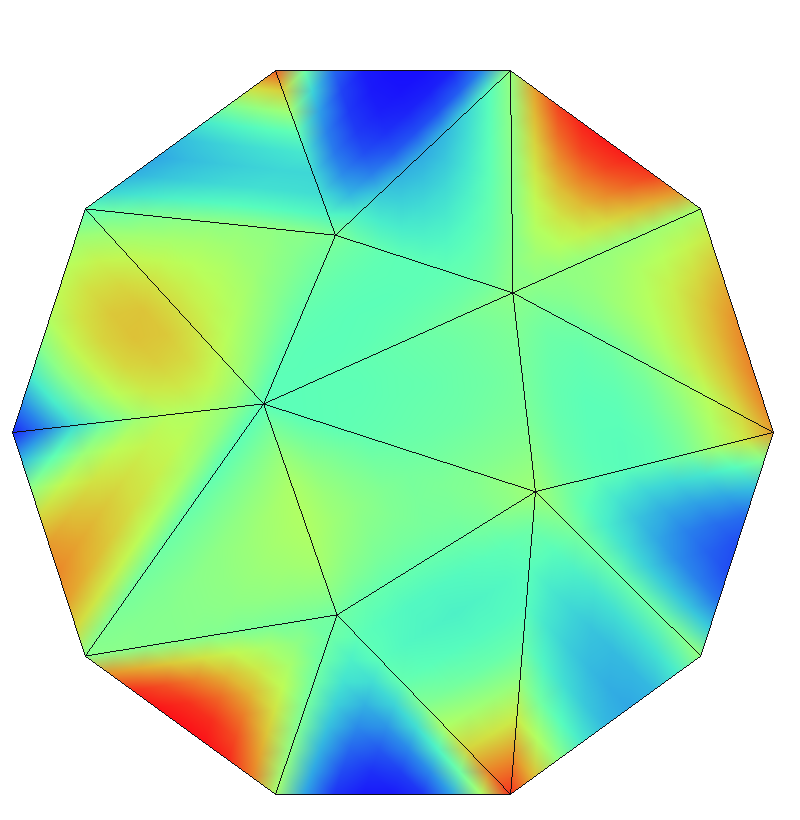}
    &
    \includegraphics[width=0.28\textwidth]{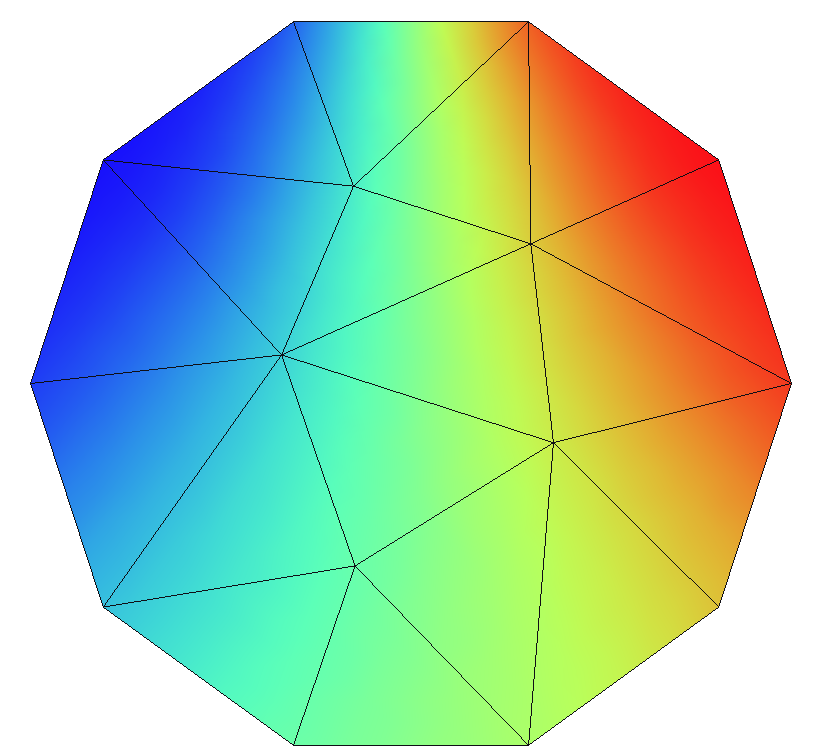} \\
    \includegraphics[width=0.27\textwidth]{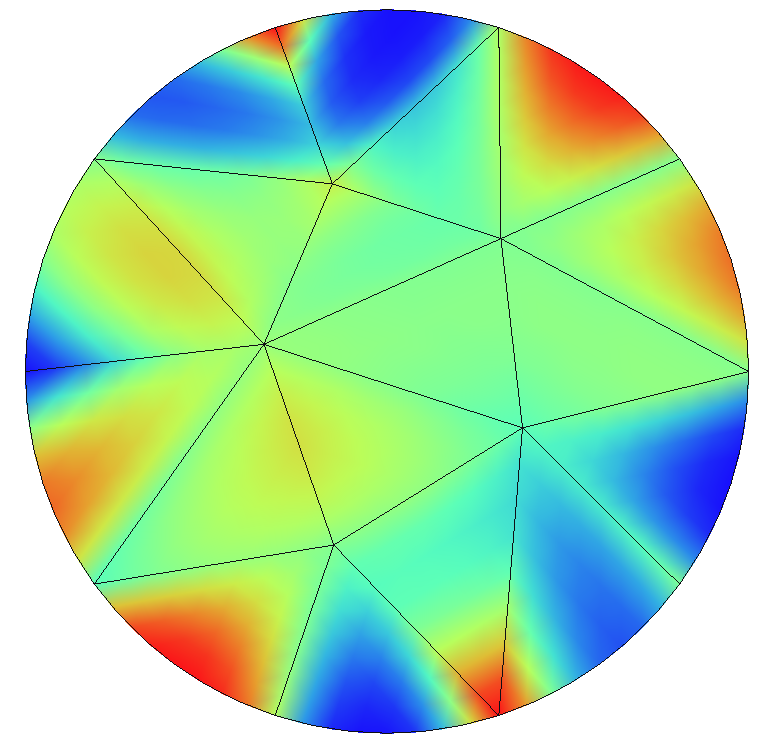}
    &
    \includegraphics[width=0.27\textwidth]{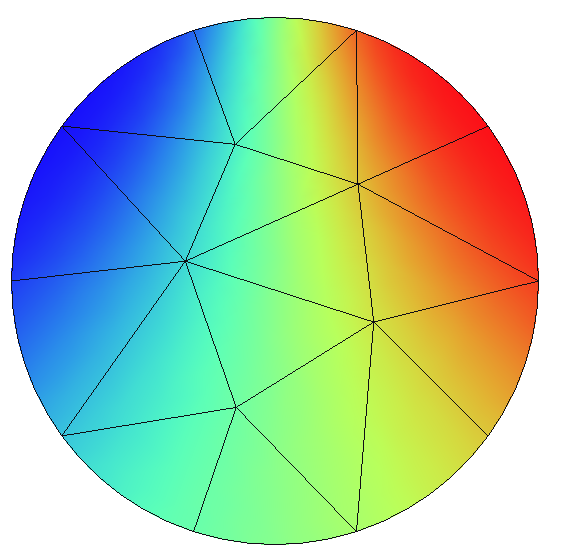}  
\end{tabular}
 \caption{\centering \blue{Visualization of the first (left) and second (right) components of the numerical solution to the linear elasticity problem, computed using a $\P 2$ finite element method on both affine and quadratic meshes.}}
\label{fig:disk-Robin}
\end{figure}
The numerical solutions $\uh$ are computed using $\mathbb{P}^k$ finite elements for $k=1,\dots, 4$, on a series of successively refined meshes of orders $r=1,\dots, 3$. Each mesh has $10 \times 2^{n-1}$ boundary edges, with~$n=1,\dots, 7$. On the finest mesh, we have $10 \times 2^6$ boundary edges and approximately with a $\P 4$ finite element method $75 \, 500$ triangles. The corresponding $\P 4$ finite element space has an approximate dimension of $2 \times 605 \, 600$. In Figure~\ref{fig:disk-Robin}, the components of the numerical solution~$\uh = (u_1, u_2)$ computed using a $\P2$ method are shown on both affine and quadratic coarse meshes. 

\medskip

For each mesh order $r$ and each finite element degree $k$, the following numerical errors are computed:
$$
   \eL2 := \| \u-\uhlifte \|_{\Lvect2(\Omega)}, \quad 
   \eH1 := \| \nabla \u- \nabla \uhlifte \|_{\Lvect2(\Omega) },\quad {\rm and} \quad
  \eLGamma := \|  \u-\uhlifte \|_{\Lvect2(\Gamma)}.
$$
The convergence orders of these errors, interpreted in terms of the mesh size $h$, are reported in Table~\ref{tab:Robin-2d-omega}~and in Figure \ref{fig:Robin-2d-omega} for the volume errors and in Table~\ref{tab:Robin-2d-L2-Gamma}~and in Figure \ref{fig:Robin-2d-L2-Gamma} for the surface error. The convergence order are evaluated from the error ratio between two successive meshes. 

\medskip

Before discussing the results obtained, we recall that the \textit{a priori} error estimates given in Theorem \ref{th-error-bound} can be written as follows,
\begin{equation}
  \label{Robin-error-estimate}
    \eL2 = O(h^{k+1} + h^{r+1}), \quad \eH1 = O(h^k+h^{r+1/2}) \quad \mbox{and} \quad \eLGamma = O(h^{k+1} + h^{r+1}).
\end{equation}

\begin{table}[!ht]
    \centering
    \begin{tabular}{|l||l|l|l|l||l|l|l|l|}
\cline{2-9}
\multicolumn{1}{c||}{}    &  \multicolumn{4}{|c||}{$\eL2$} & \multicolumn{4}{|c|}{$\eH1$}  \\[0.08cm]
\cline{2-9}
\multicolumn{1}{c||}{}    &  $\P1$ &  $\P2$ &  $\P3$ &  $\P4$ & $\P1$ &  $\P2$ &  $\P3$ &  $\P4$  \tabularnewline
\hline
 Affine mesh (r=1)   & 
 {2.00} &  {2.00} &  {2.00} &  {2.00} & 
1.01 &  {1.51} &  {1.50} &  {1.50} \tabularnewline
\hline
Quadratic mesh (r=2) & 
2.01 & 3.04 &  \textcolor{blue}{3.98} &  \textcolor{blue}{4.01} & 1.00 & 2.04 & \textcolor{blue}{2.97} &  \textcolor{blue}{3.50} \tabularnewline
\hline
Cubic mesh (r=3) & 
2.04 &  \textcolor{red}{2.48} &  \textcolor{red}{3.48} & 4.00 & 
1.01 &  \textcolor{red}{1.49} &  \textcolor{red}{2.47} & 3.49 \tabularnewline
\hline
\end{tabular}

\caption{
 \blue{Convergence orders of the volume errors (numbers highlighted in red indicate a loss in convergence rate, while those in blue denote super-convergence of the errors).}
  \label{tab:Robin-2d-omega}
}
\end{table}

\begin{figure}[h]
    \begin{center}
    \begin{tabular}{ c c } 
 \includegraphics[width=5.5cm, height=4.5cm]{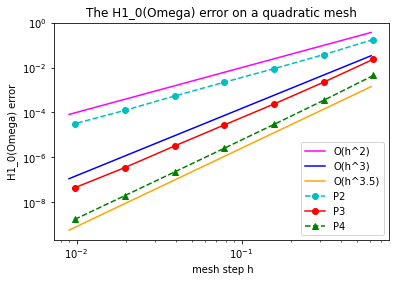} & \includegraphics[width=5.5cm, height=4.5cm]{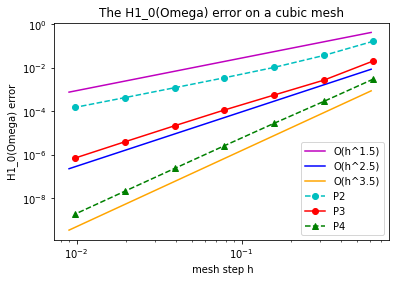}  \\ \includegraphics[width=5.5cm, height=4.5cm]{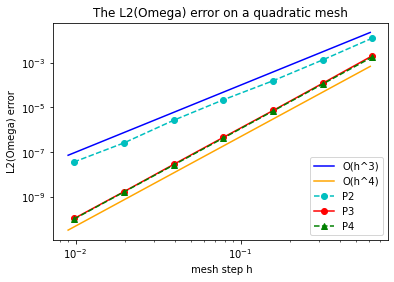} &
 \includegraphics[width=5.5cm, height=4.5cm]{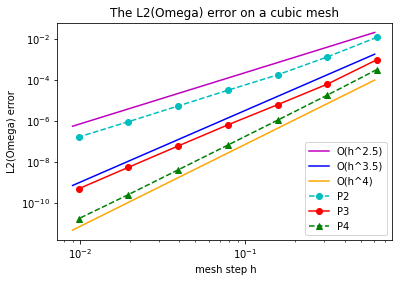}
\end{tabular}
\end{center}
     \caption{\blue{Plots of the error in volume norms with respect to the mesh size $h$, corresponding to the convergence orders reported in Table~\ref{tab:Robin-2d-omega}: $\eH1$ (top) and $\eL2$ (bottom) for quadratic meshes (left) and cubic meshes (right).}}
     \label{fig:Robin-2d-omega}
 \end{figure}
 
The convergence orders presented in Table \ref{tab:Robin-2d-omega} and in Figure \ref{fig:Robin-2d-omega}, relatively to the $\Lvect2$ and $\Hexpovect{1}_0$ norms on $\Omega$, deserve exhaustive comments. In the affine case $r=1$, the figures are in perfect agreement with the estimates~\eqref{Robin-error-estimate}: $\eL2$ is in $O(h^{k+1} + h^2)$ and  $\eH1$ is in  
$O(h^{k} + h^{1.5})$.

For quadratic meshes, a super convergence highlighted in blue is observed in the geometric error, the case $r=2$ behaves as if  $r=3$: $\eL2$ is in $O(h^{k+1} + h^4)$ and $\eH1$ is in $O(h^{k} + h^{3.5})$. {This is quite visible on the bottom left of Figure \ref{fig:Robin-2d-omega} for $\eL2$: while using respectively a $\P 3$ and~$\P 4$ method, the $\Lvect2$ error graphs in both cases follow the same line representing $O(h^4)$. In the case of the $\Lvect2$ gradient norm of the error, this super convergence is depicted with a $\P3$ (resp. $\P4$) method: the convergence order is equal to 3 (resp. 3.5) surpassing the expected value of $2.5$.}
This super convergence, though not understood, has been documented and further investigated in \cite{art-joyce-1,art-joyce-2,D4,Jaca}. Additional numerical investigations in \cite[Chapter 4]{these-J.GH} demonstrated that the geometric error associated with quadratic meshes for integral computations scales as $O(h^4)$ across various non-convex, asymmetric domains in both $2$D and $3$D. This behavior appears to be neither specific to the current problem, nor to the disk geometry considered here, nor dependent on the domain dimension.

For the cubic case, following \eqref{Robin-error-estimate}, $\eL2$ is expected to be in $O(h^{k+1} + h^4)$ and $\eH1$ in~$O(h^{k} + h^{3.5})$. This is accurately observed for a $\P1$ (resp.~$\mathbb{P}^4$) method: the $\Lvect2$ error is equal to $2.04$ (resp.~$4.00$) and the $\Lvect2$ gradient error is equal to~$1.01$ (resp.~$3.49$). However, a default of order~$-1/2$ is observed on the convergence orders in the $\mathbb{P}^2$ and $\mathbb{P}^3$ case. \blue{Remark~\ref{rk:cubic} discusses this behavior, emphasizing that it is neither caused by the lift operator nor by the specific problem considered. Instead, it is related to the finite element interpolation error. This error in the $\Hexpovect{1}(\Omega)$ norm behaves like $O(h^{k-1/2})$ instead of $O(h^k)$ for $k \ge 2$. Similarly, the associated $\Lvect2(\Omega)$ norm behaves like $O(h^{k+1/2})$ instead of $O(h^{k+1})$ for $k \ge 2$. }
\medskip 

\begin{table}[!ht]
    \centering
    \begin{tabular}{|l||l|l|l|l|}
\cline{2-5}
\multicolumn{1}{c||}{}    &  \multicolumn{4}{|c|}{$\eLGamma$}  \\[0.08cm]
\cline{2-5}
\multicolumn{1}{c||}{}    &  $\P1$ &  $\P2$ &  $\P3$ &  $\P4$ \tabularnewline
\hline
 Affine mesh (r=1)   & 
 {1.98} &  {1.99} &  {1.99} &  {1.99} \tabularnewline
\hline
Quadratic mesh (r=2) & 
2.08 & 3.01 &  \textcolor{blue}{3.99} &  \textcolor{blue}{4.01}  \tabularnewline
\hline
Cubic mesh (r=3) & 
2.09 &  \textcolor{red}{2.01} &  \textcolor{red}{2.95} & 4.00 \tabularnewline
\hline
\end{tabular}
\caption{
  \blue{Convergence orders of the surface error (numbers highlighted in red indicate a loss in convergence rate, while those in blue denote super-convergence of the errors).}
  \label{tab:Robin-2d-L2-Gamma}
}
\end{table}

\begin{figure}[h]
\begin{center}
\begin{tabular}{ c c } 
\includegraphics[width=5.5cm, height=4.5cm]{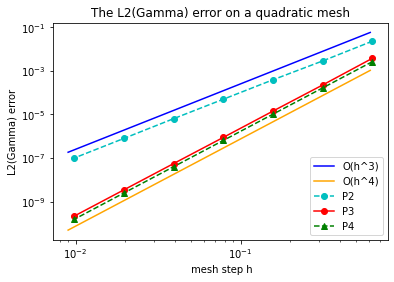} &
\includegraphics[width=5.5cm, height=4.5cm]{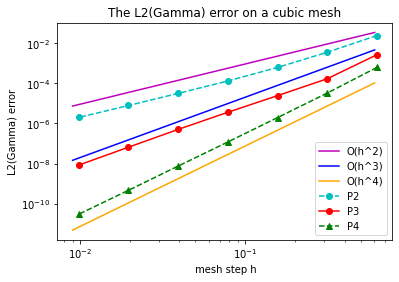} 
\end{tabular}
\end{center}
    \caption{
\blue{Plots of the error $\eLGamma$ with respect to the mesh size $h$, corresponding to the convergence orders reported in Table~\ref{tab:Robin-2d-omega}, for quadratic meshes (left) and cubic meshes (right).
}}
\label{fig:Robin-2d-L2-Gamma}
\end{figure}
 
Let us now discuss Table \ref{tab:Robin-2d-L2-Gamma} and Figure \ref{fig:Robin-2d-L2-Gamma}, where the surface error $\eLGamma$ and its convergence rates are observed. The $\Lvect2$ surface error behaves as expected, following \eqref{Robin-error-estimate}. Indeed, in the affine case,~$\eLGamma$ is in $O(h^{k+1} + h^{2})$, for any $k \ge 1$. On the quadratic meshes, the super-convergence previously mentioned is clearly visible for the surface error. For a $\P k$ method with $k \ge 3$, $\eLGamma$ is in~$O(h^{4})$ instead of $O(h^{3})$. On the cubic meshes, we notice a default of convergence of magnitude~$-1$, for a $\P2$ (resp. $\P3$) method. This finite element error loss is more than the one observed for volume errors. This new unexpected behavior is not observed in the scalar cases in \cite{art-joyce-1,art-joyce-2} and seems then coming from this vectorial case. 

\blue{
\begin{remark}
\label{rk:cubic}
    Based on the results presented in Table~\ref{tab:Robin-2d-omega} and Table~\ref{tab:Robin-2d-L2-Gamma}, the error estimates exhibit optimal convergence rates on linear and quadratic meshes. As depicted on cubic meshes ($r = 3$), these errors present the following behaviors when using $\mathbb{P}^2$ and $\mathbb{P}^3$ finite element methods:
    $$
    \|\u-\uhlifte\|_{\Hexpovect{1}(\Omega)}  \le c ( h^{k-1/2} + h^{3.5}), 
    $$
    $$
    \|\u-\uhlifte \|_{\Lvect2(\Omega) } \le c ( h^{k+1/2} + h^{4}) \quad \mbox{ and } \quad
    \|\u-\uhlifte \|_{\Lvect2(\Gamma) } \le c ( h^{k} + h^{4
    }),
    $$
    where $k =2, 3$. The above errors rely on two main components: the geometric error estimate proved in Proposition~\ref{prop:err-geom}, and the lifted interpolation error inequality stated in Proposition~\ref{prop:interpolation-ineq}. On cubic meshes, when using a~$\mathbb{P}^2$ (respectively $\mathbb{P}^3$) method, a default in the convergence rate appears to occur only in the interpolation error. To investigate this unexpected behavior, a series of tests were conducted in both 2D and 3D, as described in \cite[Section 4.5]{these-J.GH}, focusing on estimating the difference between a smooth function and its finite element interpolant. This analysis revealed that the issue is not problem-dependent, as it arises in various problems. Furthermore, these tests also proved that the loss in convergence is independent of both the lift operator used and the geometric error. While conducting some experiments, we noticed that this interpolation error is highly sensitive to the position of the central node in cubic elements without being able so far to overcome this issue.
\end{remark}
}

\subsection{The three dimensional case: on the unit ball}
Next, we consider the linear elasticity problem \eqref{sys:elast-Robin} with Robin boundary conditions on the unit ball ${\rm B(O,1)} \subset \R^3$, using the Lamé constants $\lambda_\Omega = \mu_\Omega = 1$. In this example, the source term on the unit ball is given as follows,
\begin{center}
    $\f(x,y,z) = 
\begin{bmatrix}
- 6 y^2 z^2 - x \exp(y) \\
-2xy^3 - 8z^3 - 18 xyz^2 - 2 \exp(y) \\
-12x y^2 z - 36 y z^2
\end{bmatrix},
$
\end{center}
and the source term on the unit sphere is defined as follows,
\begin{center}
    $\g(x,y,z) = 
\begin{bmatrix}
3x^2y^2z^2 + y^4z^2 + xy \exp(y) + 4x \exp(y) + 4 x y z^3 \\
13xy^3z^2 + x^2 \exp(y) + y \exp(y) + 4y^2z^3 + z^5 \\
2xy^4z + 3xy^2z^3 + 14 y z^4 + z \exp(y)
\end{bmatrix}.$
\end{center} 
The analytical solution of this problem thus is given by, $\u(x,y,z) = 
\begin{bmatrix}
x \exp(y) \\
xy^3 z^2 \\
yz^4
\end{bmatrix}$. 

\medskip
\begin{figure}[!h]
\centering
\begin{tabular}{c c c} 
    \includegraphics[width=0.285\textwidth]{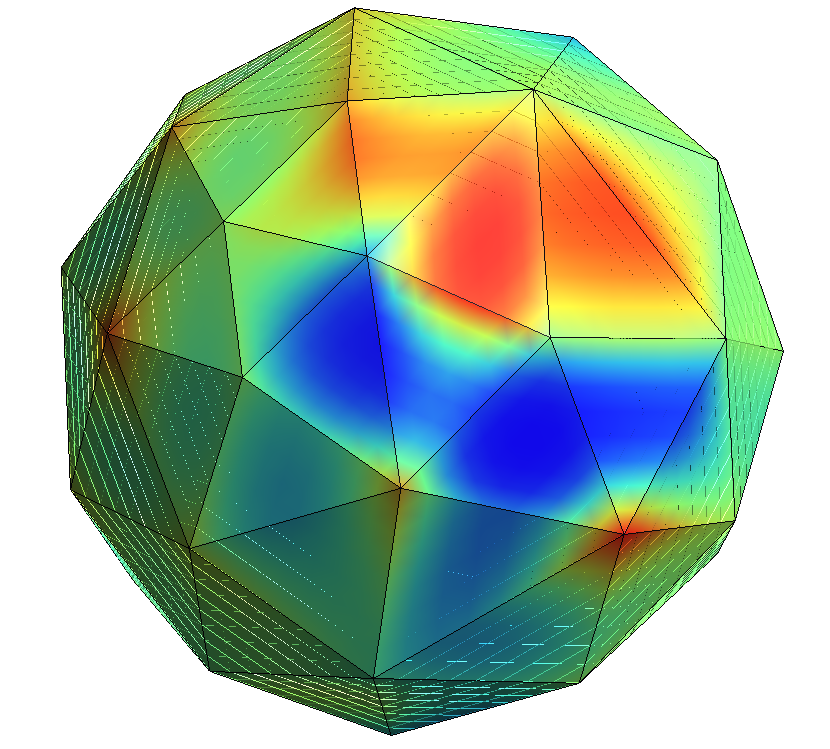}
    &
    \includegraphics[width=0.265\textwidth]{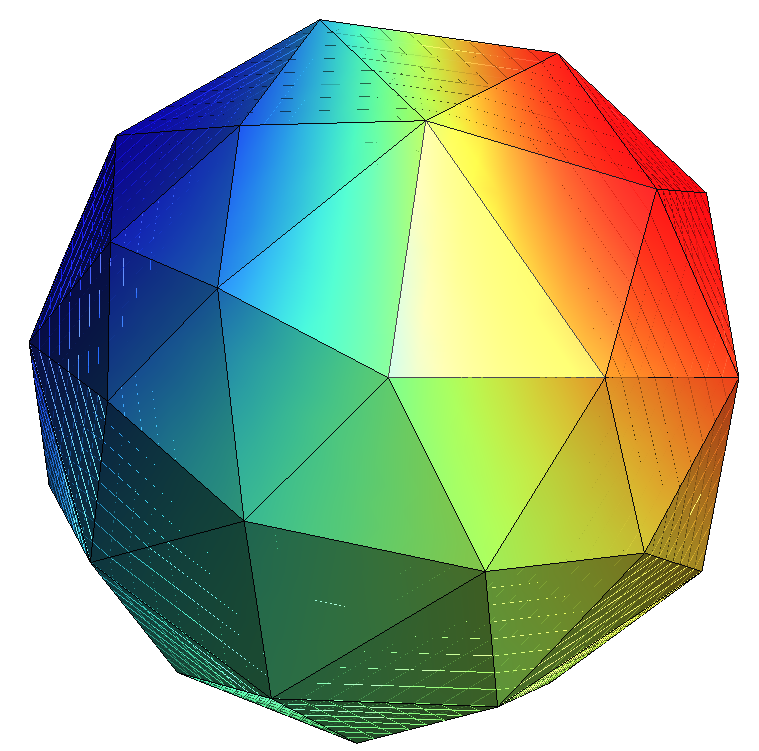} &
    \includegraphics[width=0.265\textwidth]{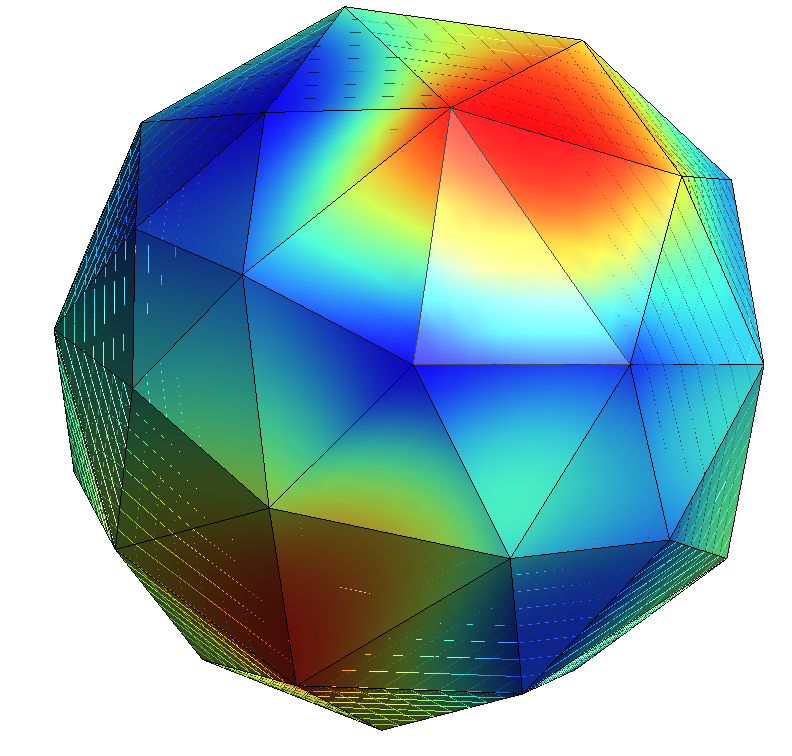}
    \\
    \includegraphics[width=0.27\textwidth]{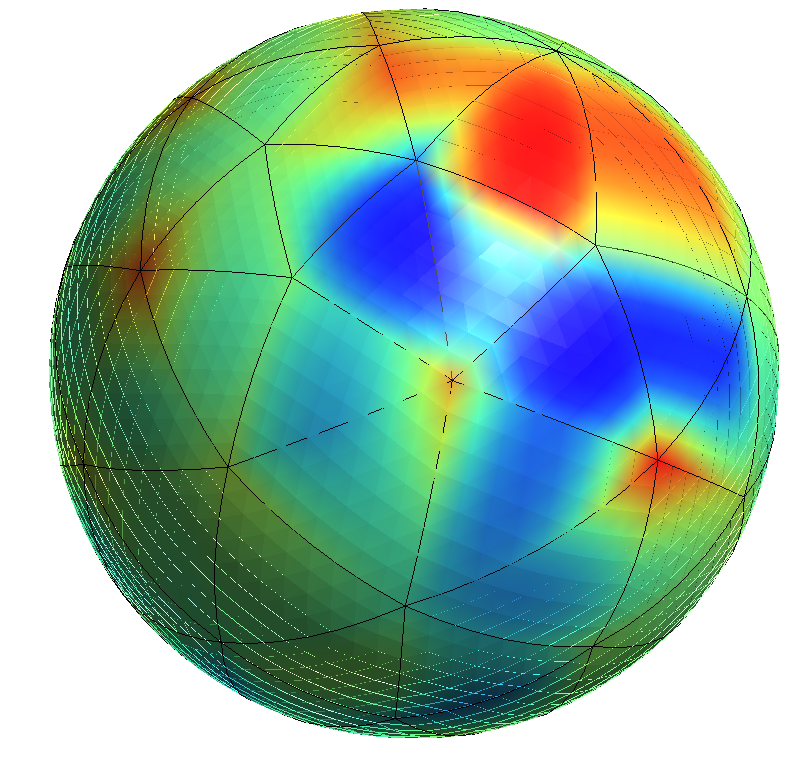} &
    \includegraphics[width=0.27\textwidth]{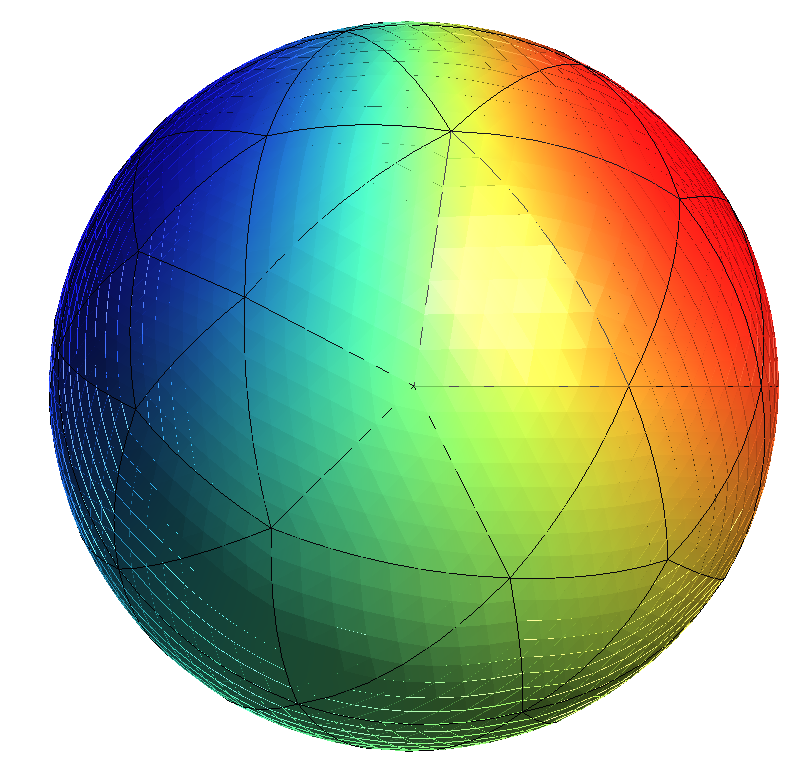} &
    \includegraphics[width=0.27\textwidth]{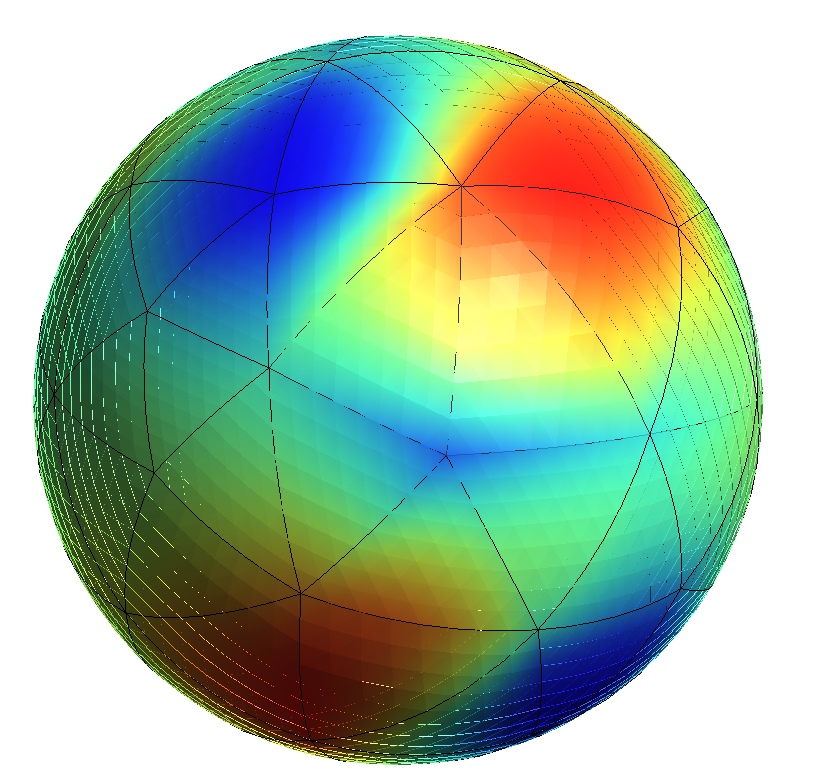} 
\end{tabular}
 \caption{\centering \blue{Visualization of the first (left), second (center), and third (right) components of the numerical solution to the linear elasticity problem, computed using a $\P 2$ finite element method on affine and quadratic meshes.}}
\end{figure}

For each mesh order $r$ and finite element degree $k$, we compute the error on a series of five successively refined meshes. Each mesh counts~$10\times 2^{n-1}$ edges on the equator circle, for~$n=1, \dots ,5$. The most refined mesh has approximately $3,1 \times 10^5$ tetrahedra and the associated $\P 3$ finite element method counts  $1,5 \times 10^6$ degrees of freedom. In the $3$D case, the computations are much more demanding. The inversion of the linear system is done using the conjugate gradient method with a Jacobi pre-conditioner. 
To handle these time consuming computations, we resorted to the UPPA research computer cluster PYRENE\footnote{PYRENE Mesocentre de Calcul Intensif Aquitain, \url{https://git.univ-pau.fr/num-as/pyrene-cluster}.}, while using shared memory parallelism on a single CPU with~$32$~cores and~$2\, 000$~Mb of memory. \medskip

As in the $2$D case, we evaluate the following errors for each mesh order $r$ and each finite element degree $k$, 
$$
   \eL2 := \| \u-\uhlifte \|_{\Lvect2(\Omega)}, \quad 
   \eH1 := \| \nabla \u- \nabla \uhlifte \|_{\Lvect2(\Omega) },\quad {\rm and} \quad
  \eLGamma := \|  \u-\uhlifte \|_{\Lvect2(\Gamma)}.
$$
In Table~\ref{tab:Robin-Omega-3d}~and in Table~\ref{tab:Robin-Gamma-3d}, we input the obtained error convergence orders evaluated from the error ratio between two successive meshes. As a general comment: similar results as in the $2$D case are observed and the quadratic meshes also exhibit a super-convergence (highlighted in blue).
\medskip

\begin{table}[!ht]
    \centering
    \begin{tabular}{|l||l|l|l||l|l|l|}
\cline{2-7}
\multicolumn{1}{c||}{}    &  \multicolumn{3}{|c||}{$\eL2$} & \multicolumn{3}{|c|}{$\eH1$}  \\[0.08cm]
\cline{2-7}
\multicolumn{1}{c||}{}    &  $\P1$ &  $\P2$ &  $\P3$ & $\P1$ &  $\P2$ &  $\P3$ \tabularnewline
\hline
 Affine mesh (r=1)   & 
 {2.00} &  {2.00} &  {2.00} &  
1.01 &  {1.51} &  {1.50}  \tabularnewline
\hline
Quadratic mesh (r=2) & 
2.01 & 3.04 &  \textcolor{blue}{3.98} & 1.00 & 2.04 & \textcolor{blue}{2.97}  \tabularnewline
\hline
Cubic mesh (r=3) & 
2.04 &  \textcolor{red}{2.48} &  \textcolor{red}{3.48} & 
1.01 &  \textcolor{red}{1.49} &  \textcolor{red}{2.47}  \tabularnewline
\hline
\end{tabular}

\caption{
  \blue{Convergence orders of the volume errors (numbers highlighted in red indicate a loss in convergence rate, while those in blue denote super-convergence of the errors).} 
  \label{tab:Robin-Omega-3d}
}
\end{table}

\blue{Table~\ref{tab:Robin-Omega-3d} presents results similar to those obtained in the disk case: both the $\eH1$ and $\eL2$ errors follow the estimate given in \eqref{Robin-error-estimate}. In the cubic case, the same loss of $-1/2$ in the convergence rate, as discussed in Remark~\ref{rk:cubic}, is observed.} Indeed, the $\Hexpovect{1}(\Omega)$ error is in $O(h^{1.49})$ (resp. $O(h^{2.47})$) for a $\P2$ (resp. $\P 3$) method and the $\Lvect2(\Omega)$ error is in $O(h^{2.48})$ (resp.~$O(h^{3.48})$) for a $\P2$ (resp. $\P 3$).
\medskip

\begin{table}[!ht]
    \centering
    \begin{tabular}{|l||l|l|l|}
\cline{2-4}
\multicolumn{1}{c||}{}    &  \multicolumn{3}{|c|}{$\eLGamma$}  \\[0.08cm]
\cline{2-4}
\multicolumn{1}{c||}{}    &  $\P1$ &  $\P2$ &  $\P3$ \tabularnewline
\hline
 Affine mesh (r=1)   & 
 {1.98} &  {1.99} &  {1.99} \tabularnewline
\hline
Quadratic mesh (r=2) & 
2.08 & 3.01 &  \textcolor{blue}{3.99}   \tabularnewline
\hline
Cubic mesh (r=3) & 
2.09 &  \textcolor{red}{2.01} &  \textcolor{red}{2.95}  \tabularnewline
\hline
\end{tabular}
\caption{
  \blue{Convergence orders in the surface norm (numbers highlighted in red indicate a loss in convergence rate, while those in blue denote super-convergence of the errors).}
  \label{tab:Robin-Gamma-3d}
}
\end{table}

\blue{As observed in the disk case, the $\Lvect2$ surface error in Table~\ref{tab:Robin-Gamma-3d} follows the expected behavior described in Equations~\eqref{Robin-error-estimate}. It is worth mentioning that, as in the 2D case on cubic meshes following Remark~\ref{rk:cubic}, a loss of one order in the convergence rate is observed with the $\P2$ (respectively $\P3$) method: we obtain a rate of 2 (resp. 3) instead of the expected 3 (resp. 4).}

\section{Proof of Theorem \ref{th-error-bound}}
\label{sec:error-estimation-proof}

\blue{This section is devoted to the derivation of the error estimates. We begin by introducing several preliminary tools essential for this analysis. First, we provide a detailed expression of the bilinear form $a$ appearing in the weak formulation \eqref{fv_faible}. In the context of linear elasticity, the bilinear form involves tensorial quantities and derivatives of vector-valued functions, which are particularly intricate to handle. We then employ the lift operator to express integrals over the discrete domain in terms of integrals over the physical domain. These expressions allow us to define the lifted bilinear form $\ahell$. Additionally, we recall several key estimates and properties associated with the lift transformation $\Ghr$, which are crucial for controlling the geometric approximation errors.}

\paragraph{The expression of the bilinear form.}
The following proposition gives a detailed expression of the bilinear form $a$. 
\begin{proposition}
\label{prop:a-express}
    The bilinear form in the weak formulation \eqref{fv_faible}, $a:\H\times\H\to\R$, can be expressed as follows,
\begin{equation*}
a(\u,\v)  = \muomega \( \int_{\Omega} \na\u : \na\v \, \dx + \int_{\Omega} (\na\u)^{\transpose} : \na\v \, \dx \) + \lambdaomega \int_{\Omega} \div (\u)  \,\div(\v) \, \dx  + \int_{\Gamma} \u\cdot \v \, \ds,
\end{equation*}
for any $\u, \v \in \H$.
\end{proposition}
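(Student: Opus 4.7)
The plan is to start from the second form of $a(\u,\v)$ given in the statement, namely
$$a(\u,\v)=\int_{\Omega}\Aomega(\eu):\na\v\,\dx+\int_{\Gamma}\u\cdot\v\,\ds,$$
and simply expand the integrand using the definition of the Hooke tensor in \eqref{eq:Hooke-tensor}. Writing $\Aomega\eu=2\muomega\,\eu+\lambdaomega\,\Tr(\eu)\,\I$ and then substituting the definition \eqref{eq:e(u)} of the strain tensor yields
$$\Aomega\eu=\muomega\bigl(\na\u+(\na\u)^{\transpose}\bigr)+\lambdaomega\,\Tr(\eu)\,\I.$$
The Frobenius product then distributes linearly over the three summands.

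Next, I would simplify the two scalar quantities that appear. First, $\Tr(\eu)=\tfrac12\bigl(\Tr(\na\u)+\Tr((\na\u)^{\transpose})\bigr)=\Tr(\na\u)=\div(\u)$, using that the trace is invariant under transposition. Second, $\I:\na\v=\Tr(\na\v)=\div(\v)$, directly from the definition of the Frobenius inner product. Combining these, the volume integrand becomes
$$\Aomega(\eu):\na\v=\muomega\,\na\u:\na\v+\muomega\,(\na\u)^{\transpose}:\na\v+\lambdaomega\,\div(\u)\,\div(\v),$$
and integrating over $\Omega$ and adding the unchanged boundary term yields exactly the claimed identity.

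There is essentially no obstacle here: the argument is purely algebraic manipulation of tensor expressions and requires no integration by parts or regularity argument beyond $\u,\v\in\H$. The only subtlety worth flagging explicitly is the equality of the two forms of $a$ stated in the definition itself, which relies on the symmetry of $\Aomega\eu$ (a symmetric matrix $S$ satisfies $S:M=S:\tfrac12(M+M^{\transpose})$, so $\Aomega\eu:\na\v=\Aomega\eu:\ev$); this is the content of the referenced property \eqref{rk:Aomega-sym} and is where the passage from $\ev$ to $\na\v$ is justified. Once this is in hand, the proposition follows in a few lines.
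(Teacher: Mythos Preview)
Your proposal is correct and follows essentially the same approach as the paper: both proofs expand $\Aomega(\eu):\na\v$ using the definitions \eqref{eq:Hooke-tensor} and \eqref{eq:e(u)}, together with the identities $\Tr(\eu)=\div(\u)$ and $\I:\na\v=\div(\v)$. The only minor difference is that the paper proves the symmetry identity \eqref{rk:Aomega-sym} explicitly within this proof (since that label is introduced here), whereas you treat it as already established; your parenthetical justification via the symmetry of $\Aomega\eu$ is, however, exactly the paper's argument.
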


\begin{proof}
  The proof is based on the following equations, which state that, for any $\u, \v \in \H$,
\begin{align}
  \Aomega (\eu) : \ev &= \Aomega (\eu) : \na \v, \label{rk:Aomega-sym} \\
   &= \muomega\big( \na \u : \na \v + (\na \u)^{\transpose} : \na \v \big) + \lambdaomega \div(\u) \, \div(\v). \label{rk:Aomega_eu:na_v}
\end{align}

We start by proving Equation \eqref{rk:Aomega-sym}. By definition of the strain tensor in \eqref{eq:e(u)}, we have,
    \begin{align*}
    \Aomega (\eu): \ev  &= \frac{1}{2} \bigg( \Aomega (\eu): \na \v +  \Aomega (\eu): (\na \v){\transpose} \bigg)  \\
    & = \frac{1}{2} \bigg( \Aomega (\eu): \na \v +  ( \Aomega (\eu) ){\transpose}: \na \v \bigg).    
    \end{align*}
    Since $( \Aomega (\eu) ){\transpose} = \Aomega (\eu)$ for any $\u \in \H$, we obtain \eqref{rk:Aomega-sym}. 
    \medskip

To prove \eqref{rk:Aomega_eu:na_v}, we use the definition of the Hooke tensor in \eqref{eq:Hooke-tensor} and  of the strain tensor in~\eqref{eq:e(u)} as follows,
\begin{align*}
    \Aomega (\eu): \na \v   &= 2\muomega \, \eu : \na \v \, + \lambdaomega \, \Tr(\na \u)\, \I : \na \v \\
    &= \muomega \big( \na \u: \na \v + (\na \u)^{\transpose}: \na \v \big) + \lambdaomega \, \Tr(\na \u)\, \Tr( \na \v) \\
    & = \muomega \big( \na \u: \na \v + (\na \u)^{\transpose}: \na \v \big) + \lambdaomega \,  \div ( \u) \, \div (\v).
    \end{align*}
\end{proof}   


\paragraph{Surface integrals.}
Let~$\uh, \vh \in \Lvect2 (\ghr) $ with $\uhlifte, \vhlifte \in \Lvect2 (\Gamma)$ as their respected lifts. Then, the following integral over $\ghr$ can be expressed with respect to an integral over $\Gamma$ as follows,
\begin{equation}
    \label{pass_fct_scalaire_surface-Elasticity}
    \int_{\ghr}  \uh \cdot \vh  \, \ds = \int_{\Gamma}   \uhlifte \cdot \vhlifte   \frac{ 1}{\Jblifte} \, \ds,
\end{equation}
where \blue{$\Jb$ denotes the tangential Jacobian of $b$ the orthogonal projection on the boundary defined in \cite[Equation (2.10)]{D1}}, and $\Jblifte$ is its lift.
\medskip
%
%

%
%
%
\paragraph{Volume integrals.} In a similar manner, consider $\uh, \vh \in \Hexpovect{1} (\omhr)$ and let $\uhlifte, \vhlifte \in \Hexpovect{1} (\Omega)$ be their respected lifts, we have,
\begin{equation}
\label{pass_fct_scalaire_volume-Elasticity}
    \int_{\omhr}\uh \cdot \vh \, \dx = \int_\Omega \uhlifte \cdot \vhlifte \frac{1}{\Jhlifte} \dx,
\end{equation} 
where $\Jh$ denotes the Jacobian of $\Ghr$ and $\Jhlifte$ is its lift. 
\medskip

Moreover, we define an expression for the lift of the gradient of a discrete function $\vh \in \Hexpovect{1} (\omhr)$. Note that, quoting \cite{art-joyce-1}, for any $x \in \omhr$, using a change of variables $z=\Ghr(x) \in \Omega$, one has for all $i = 1, \dots, d$, $(\nabla v_{hi})^\ell(z) =  (\diff {\Ghr}){\transpose}(x)  \nabla v_{hi}^\ell{(z)},$ where~$(\diff {\Ghr}){\transpose}$ is the transpose of~$\diff \Ghr$. For simplicity, from here on now, we denote for $z \in \Omega$,
\begin{equation}
    \label{notation:G-elasticity}
    \transGhr(z) :=  (\diff {\Ghr})(x).
\end{equation}
Hence, one has,
\begin{equation}
\label{grad-elasticity}
(\na \vh)^\ell =  \big((\Gh \nabla v_{hi}^\ell )^{\transpose} \big)_{i=1}^d = \na \vhlifte \transGhr.
\end{equation}
\begin{lem}
Let $\uh, \vh \in \Hexpovect{1} (\omhr) $ and let $\uhlifte, \vhlifte \in \Hexpovect{1} (\Omega)$. Following the notation in \eqref{grad-elasticity}, we have,
\begin{multline}
\label{pass_grad_volume-Elasticity}
    \int_{\omhr} \Aomega (\euh) : \na \vh \, \dx   = \muomega \int_{\Omega} (\na\uhlifte \transGhr) : (\na\vhlifte \transGhr) \, \frac{1}{\Jhlifte} \, \dx \\
      + \muomega \int_{\Omega} (\na\uhlifte \transGhr){\transpose} : (\na\vhlifte \transGhr) \, \frac{1}{\Jhlifte} \, \dx 
      + \lambdaomega \int_{\Omega} \Tr (\na\uhlifte \transGhr)  \, \Tr(\na\vhlifte \transGhr) \, \frac{1}{\Jhlifte} \, \dx,
\end{multline}
where $\Jhlifte$ is the lift of the the Jacobian of $\Ghr$.
\end{lem}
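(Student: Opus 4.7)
\medskip
\noindent\textbf{Proof plan.} The plan is to reduce the identity to three scalar/tensorial changes of variable via the lift $\Ghr$, after first rewriting the integrand on $\omhr$ in the same form as in Proposition~\ref{prop:a-express}. Concretely, I would apply Equation~\eqref{rk:Aomega_eu:na_v} pointwise on $\omhr$ to the discrete field $\uh$, obtaining
\begin{equation*}
\Aomega(\euh):\na\vh \;=\; \muomega\bigl(\na\uh:\na\vh \;+\;(\na\uh)^{\transpose}:\na\vh\bigr)\;+\;\lambdaomega\,\div(\uh)\,\div(\vh).
\end{equation*}
This reduces the problem to three integrals on $\omhr$: two Frobenius-type terms and one product of divergences, to which I will apply the change of variable independently.

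Next, for each term I would use the piecewise $\c1$-diffeomorphism $\Ghr:\omhr\to\Omega$ together with Definition~\ref{def:liftvolume}, mirroring the derivation of~\eqref{pass_fct_scalaire_volume-Elasticity}. Performing the change of variable $z=\Ghr(x)$ on each curved element $\tr$ introduces the Jacobian factor $1/\Jhlifte$ (since $\dx = (1/\Jhlifte)\,\dx$ after the change), and transforms the integrand $\na\uh:\na\vh$ into $(\na\uh)^\ell:(\na\vh)^\ell$. Applying the gradient-lift identity~\eqref{grad-elasticity}, namely $(\na\vh)^\ell=\na\vhlifte\,\transGhr$, and likewise for $\uh$, turns the first Frobenius term into
\begin{equation*}
\int_{\omhr}\na\uh:\na\vh\,\dx \;=\;\int_{\Omega}(\na\uhlifte\,\transGhr):(\na\vhlifte\,\transGhr)\,\tfrac{1}{\Jhlifte}\,\dx.
\end{equation*}
The transposed term is handled identically by taking the transpose inside the Frobenius product (recall $A:B=A^{\transpose}:B^{\transpose}$ and the linearity of the lift), yielding the second term on the right-hand side.

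For the divergence term I would use that $\div(\uh)=\Tr(\na\uh)$, so that after lifting and applying~\eqref{grad-elasticity} elementwise, $(\div\uh)^\ell = \Tr((\na\uh)^\ell) = \Tr(\na\uhlifte\,\transGhr)$, and similarly for $\vh$. Summing and assembling over all elements of the mesh, which is legitimate because $\Ghr$ is globally continuous and piecewise $\c1$ and both $\uhlifte$ and $\vhlifte$ lie in $\Hexpovect{1}(\Omega)$, yields exactly the right-hand side of~\eqref{pass_grad_volume-Elasticity}.

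The only genuinely delicate point is bookkeeping: one must verify that the Jacobian factor produced by the change of variable is precisely $1/\Jhlifte$ (as fixed by the convention used in~\eqref{pass_fct_scalaire_volume-Elasticity}), and that the identity $(\na\vh)^\ell=\na\vhlifte\,\transGhr$ applies to each component of $\vh$ in the vector-valued setting. Both facts are already established in~\cite{art-joyce-1} and recalled in~\eqref{grad-elasticity} and~\eqref{pass_fct_scalaire_volume-Elasticity}; the remainder of the proof is a direct assembly of these ingredients on each curved element and summation. No new estimate is needed.
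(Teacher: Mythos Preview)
Your proposal is correct and follows essentially the same approach as the paper: expand $\Aomega(\euh):\na\vh$ into the two Frobenius terms and the trace/divergence term (the paper does this directly via the definitions of $\Aomega$ and $e(\cdot)$ rather than citing~\eqref{rk:Aomega_eu:na_v}, but that is the same computation), then apply the change of variable $z=\Ghr(x)$ together with~\eqref{grad-elasticity} to each term. The paper's argument is identical in structure and content.
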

\begin{proof}
Consider $\uh, \vh \in \Hexpovect{1} (\omhr) $ with their respective lifts $\uhlifte, \vhlifte \in \Hexpovect{1} (\Omega)$. By definition of the Hooke tensor $\Aomega$ in~\eqref{eq:Hooke-tensor}, we can seperate the integrale into two terms as follows,
\begin{equation}
\label{proof:eq-grad-vol}
    \int_{\omhr} \Aomega (\euh) : \na \vh \, \dx 
    = \muomega I_1 + \lambdaomega I_2,
\end{equation}
where $I_1  =  2 \int_{\omhr} (\euh) : (\na\vh) \, \dx$ and $I_2= \int_{\omhr}  \Tr(\euh)  \, \Tr (\na\vh) \,  \dx$. We proceed by estimating each integral separately. \medskip

For the first term, we use the expression of $\euh$ in~\eqref{eq:e(u)} as follows, 
\begin{eqnarray*}
I_1 &=& \int_{\omhr} (\na \uh) : (\na\vh) \, \dx + \int_{\omhr} (\na \uh){\transpose} : (\na\vh)  \, \dx. 
\end{eqnarray*}
Using a change of variable and Equation \eqref{grad-elasticity}, we have, 
\begin{eqnarray*}
    I_1 & = & \int_{\Omega} (\na\uhlifte \transGhr) : (\na\vhlifte \transGhr) \, \frac{1}{\Jhlifte} \, \dx  +  \int_{\Omega} (\na\uhlifte \transGhr){\transpose} : (\na\vhlifte \transGhr) \, \frac{1}{\Jhlifte} \, \dx.
\end{eqnarray*}

As for the second term, we take advantage of the fact that $\Tr(\euh) =  \Tr(\na \uh)$ and we proceed in a similar manner using a change of variable and applying Equation \eqref{grad-elasticity} as follows, 
\begin{multline*}
I_2= \int_{\omhr}  \Tr(\euh)  \, \Tr (\na\vh)  \dx
    = \int_{\omhr} \Tr(\na \uh)  \, \Tr (\na\vh) \dx 
    = \int_{\Omega} \Tr(\na\uhlifte \transGhr)  \, \Tr (\na\vhlifte \transGhr) \, \frac{1}{\Jhlifte} \, \dx.
\end{multline*}
Replacing the expression of $I_1$ and $I_2$ in \eqref{proof:eq-grad-vol} concludes the proof.
\end{proof}

\paragraph{Lift transformation estimations.}
We recall some essential properties mentioned in \cite{art-joyce-1}. Indeed, the lift transformation $\Ghr$ is globally continuous and piece-wise differentiable on each mesh element. Additionally, quoting \cite[Proposition 2]{art-joyce-1}, where the full proof is detailed: let $\tr \in \taur$, the mapping ${\Ghr}_{|_{\tr}}$ is~$\c{r+1}(\tr)$ regular and a $\c 1$- diffeomorphism from $\tr$ onto $\te$.  Moreover, for a sufficiently small mesh size $h$, there exists a constant $c>0$, independent of $h$, such that, 
\begin{equation}
\label{ineq:Gh-Id_Jh-1} 
  \forall \ x \in \tr, \ \ \ \ \| \diff {\Ghr}(x) - \I \| \le c h^r \qquad \mbox{ and } \qquad 
    | \Jh(x)- 1 | \le c h^r.
\end{equation}
where $\diff \Ghr$ is the differential of $\Ghr$ and $\Jh$ is its Jacobin. \medskip

A direct consequence of the inequalities \eqref{ineq:Gh-Id_Jh-1}, using the lift in Definition \ref{def:liftvolume}, is that both~$\transGhr$ and~$\Jhlifte$ are bounded on $\te$ independently of the mesh size h. Furthermore, the following inequalities, which are a key ingredient for the proof of the error estimations, are presented, 
    \begin{equation}
        \label{ineq:Ghr-Id_1/Jh-1}
        \forall \ x \in \te, \ \ \ \ \| \transGhr (x) - \I \| \le c h^r \qquad \mbox{ and } \qquad
         \left| \frac{1}{\Jhlifte(x)}- 1 \right| \le c h^r.
    \end{equation}

A similar bound of $\Jblifte-1$ with respect to the mesh size $h$ and the geometrical order of the mesh $r$ is proved in~\cite{D1}: there exists a constant $c>0$ independent of $h$ such that,  
\begin{equation}
  \label{ineq:AhJh-part1}
    \left\| 1-\frac{1}{\Jblifte} \right\|_{\Linf(\Gamma)} \le ch^{r+1},
\end{equation}
\blue{where $\Jblifte$ is the lift of $\Jb$, the tangential Jacobian of $b$ the orthogonal projection on the boundary~$\Gamma$, defined \cite[Equation (2.10)]{D1}.} 

\paragraph{Estimations near the boundary.}
From this point forward, we denote $B_h^\ell \subset \Omega$ as the union of all the non-internal elements of the exact mesh $\taue$ defined in Appendix \ref{mesh:appendix}, 
\begin{equation}
    \label{eq:b_h^ell}
     B_h^\ell:=  \{ \te \in \taue ; \  \te \, \mbox{has at least two vertices on } \Gamma \}. 
\end{equation}
Note that, by definition of $B_h^\ell$, we have,
\begin{equation}
    \label{JDG}
    \frac{1}{\Jhlifte}-1 =0  \ \ \ \mbox{and} \ \ \ \transGhr - \I = 0 \ \ \ \mbox{in} \ \Omega \backslash B_h^\ell.
\end{equation}

The following corollary involving  $B_h^\ell$ is a direct consequence of \cite[Lemma 4.10]{elliott} or~\cite[Theorem~1.5.1.10]{Grisvard2011}. It has been used in many error analysis problems such as in \cite{elliott,art-joyce-1,art-joyce-2} to gain a convergence order.
\begin{corollary}
Let $\v \in \Hexpovect{1}(\Omega)$ and $\w \in \Hexpovect{2}(\Omega)$. Then, for a sufficiently small~$h$, there exists~$c>0$ such that the following inequalities hold,
\begin{equation}
\label{h1/2_blh}
    \|\v\|_{\Lvect2(B_h^\ell)} \le c h^{1/2} \|\v\|_{\Hexpovect{1}(\Omega)} 
    \qquad \mbox{and} \qquad
    \|\w\|_{\Hexpovect{1}(B_h^\ell)} \le c h^{1/2} \|\w\|_{\Hexpovect{2}(\Omega)}.
\end{equation}
\end{corollary}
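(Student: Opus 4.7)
The strategy is to reduce both bounds to a classical boundary-strip estimate. The first step is to show that $B_h^\ell$ is contained in a tubular neighbourhood of $\Gamma$ of thickness $\mathcal{O}(h)$: any exact element $\te$ contributing to $B_h^\ell$ has at least two vertices on $\Gamma$, so shape-regularity and quasi-uniformity of the underlying affine mesh $\tauh$, transported to $\taue$ through the smooth transformation $\fte$, yield $\mathrm{diam}(\te) \le c h$; hence every point of $B_h^\ell$ lies at distance at most $c h$ from $\Gamma$. It therefore suffices to prove the two estimates with $B_h^\ell$ replaced by the strip $S_h := \{x \in \Omega : \dist(x,\Gamma) \le c h\}$.

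The second step uses the tubular neighbourhood parametrisation $x = y - s\, \nn(y)$, with $y = b(x) \in \Gamma$ and $s \in [0, c h]$; since $\Gamma$ is smooth, the Jacobian of this change of variables is bounded uniformly away from $0$ and $\infty$ provided $h$ is small enough. For a smooth representative of $\v$, the fundamental theorem of calculus gives
$$\v(x) = \v(y) - \int_0^s \na \v(y - \tau\, \nn(y))\, \nn(y) \, \d\tau,$$
so by Cauchy--Schwarz, integration in $s$ over $[0, c h]$ and then in $y$ over $\Gamma$, one obtains
$$\|\v\|_{\Lvect2(S_h)}^2 \,\le\, c h\, \|\v\|_{\Lvect2(\Gamma)}^2 + c h^2\, \|\na \v\|_{\Lvect2(\Omega)}^2.$$
The standard trace inequality $\|\v\|_{\Lvect2(\Gamma)} \le c\, \|\v\|_{\Hexpovect{1}(\Omega)}$ then yields the first estimate, and the general case $\v \in \Hexpovect{1}(\Omega)$ follows by density. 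Applying this first inequality componentwise to $\w$ itself and to $\na \w \in \Hexpovect{1}(\Omega)$ (the latter requiring $\w \in \Hexpovect{2}(\Omega)$) and adding the squares gives the second estimate.

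The argument is essentially a repackaging of Grisvard's Theorem~1.5.1.10 in the vector-valued setting; the bounds are in fact proved scalar-componentwise, so there is no additional conceptual difficulty attached to the vector framework. The main (and in fact only) genuinely geometric step is the containment $B_h^\ell \subset S_h$, which relies on the diameter control of the exact curved elements inherited from the quasi-uniform, shape-regular affine mesh through the $\c{r+1}$ transformation $\fte$. I expect this to be the sole technical point requiring care; everything downstream is a standard one-dimensional integration along normals together with a trace inequality.
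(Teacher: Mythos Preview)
Your proposal is correct and aligns with the paper's approach: the paper does not give a proof at all but simply states that the corollary is a direct consequence of \cite[Lemma~4.10]{elliott} or \cite[Theorem~1.5.1.10]{Grisvard2011}, and you have essentially unpacked the content of Grisvard's theorem (the boundary-strip estimate via tubular-neighbourhood integration along normals combined with the trace inequality) in the vector setting, together with the geometric observation $B_h^\ell \subset S_h$. Your identification of the only genuinely technical point --- the diameter control of the exact curved elements $\te$ inherited from the quasi-uniform affine mesh through the smooth map $\fte$ --- is accurate, and your derivation of the second inequality from the first by applying it componentwise to $\w$ and to the entries of $\na\w$ is exactly right.
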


\subsection{The two main error components}

Next, are discussed the two main quantities controlling the total a priori error produced when approximating the exact solution. 

\subsubsection{The interpolation error}

We recall that $\Vhlifted$ is the lifted $\P k$ finite element space, with $k\ge1$. Its lifted interpolation operator $\Ihlifted$ of order $r$ is given by, 
\blue{\begin{equation*}
    \fonction{\Ihlifted}{[\c0 (\overline{\Omega})]^d}{\Vhlifted}{\v := (\v_i)_{i=1}^d}{\Ihlifted (\v ) := \big( \Ihlifte (\v_i) \big)_{i=1}^d,}
\end{equation*}}
where $\Ihlifte$ is the scalar lifted interpolation operator defined in \cite[\S 5.1]{art-joyce-1}.
Notice that, since $\Omega$ is an open subset of $\R^2$ or $\R^3$, then for $k\ge1$ we have the following Sobolev injection~\blue{$\Hexpo{k+1}(\Omega) \hookrightarrow \c0 (\overline{\Omega})$}. Thus, any function \blue{$\w \in \Hexpovect{k+1}(\Omega)\subset [\c0 (\overline{\Omega})]^d$} may be associated to an interpolation element~$\Ihlifted(\w) \in \Vhlifted$. 

\medskip

{We present the following interpolation inequality associated with the interpolation operator $\Ihlifted$, which plays a part in the following error estimation. }
\begin{proposition}
\label{prop:interpolation-ineq}
Let $\v \in \Hexpovect{k+1} (\Omega)$ and $2 \le m \le k+1$. There exists a constant~$c>0$ independent of the mesh size $h$, such that the interpolation operator $\Ihlifted$ satisfies the following inequality,
$$
    \|\v-\Ihlifted \v\|_{\Lvect2(\Omega)} + h \|\v-\Ihlifted \v\|_{\Hexpovect{1}(\Omega)} \le c h^{m} \|\v\|_{\Hexpovect{m} (\Omega)}.
$$
\end{proposition}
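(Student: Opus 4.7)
The plan is to reduce the vector-valued interpolation estimate to the scalar one, which is already available from \cite[\S 5.1]{art-joyce-1}. Since $\Ihlifted$ is defined component-wise, namely $\Ihlifted \v = \big(\Ihlifte v_i\big)_{i=1}^d$, and since the vector Sobolev norms $\|\cdot\|_{\Lvect2(\Omega)}$ and $\|\cdot\|_{\Hexpovect{s}(\Omega)}$ are built from their scalar counterparts via $\|\v\|_{\Hexpovect{s}(\Omega)}^2 = \sum_{i=1}^d \|v_i\|_{\Hexpo{s}(\Omega)}^2$, the proof should essentially follow by applying the scalar estimate in each component and summing.

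First I would invoke the scalar lifted interpolation inequality: for every component $v_i \in \Hexpo{k+1}(\Omega) \subset \c0(\overline{\Omega})$ and every $2 \le m \le k+1$, the operator $\Ihlifte$ from \cite[\S 5.1]{art-joyce-1} satisfies
$$
\|v_i - \Ihlifte v_i\|_{\L2(\Omega)} + h \, \|v_i - \Ihlifte v_i\|_{\HH1(\Omega)} \le c\, h^{m} \, \|v_i\|_{\Hexpo{m}(\Omega)},
$$
with a constant $c>0$ independent of $h$. The regularity of each $v_i$ required here is guaranteed by the Sobolev embedding $\Hexpo{k+1}(\Omega) \hookrightarrow \c0(\overline{\Omega})$ valid in dimension $d=2,3$ for $k \ge 1$, so the nodal interpolant $\Ihlifte v_i$ is well defined.

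Next I would square the inequality above, or rather apply it and then sum over $i = 1, \dots, d$. Using the definition of the vector norms,
$$
\|\v - \Ihlifted \v\|_{\Lvect2(\Omega)}^2 = \sum_{i=1}^d \|v_i - \Ihlifte v_i\|_{\L2(\Omega)}^2, \qquad \|\v - \Ihlifted \v\|_{\Hexpovect{1}(\Omega)}^2 = \sum_{i=1}^d \|v_i - \Ihlifte v_i\|_{\HH1(\Omega)}^2,
$$
one bounds each sum by $c^2 h^{2m} \sum_{i=1}^d \|v_i\|_{\Hexpo{m}(\Omega)}^2 = c^2 h^{2m} \|\v\|_{\Hexpovect{m}(\Omega)}^2$ (respectively $c^2 h^{2(m-1)} \|\v\|_{\Hexpovect{m}(\Omega)}^2$ for the $\Hexpovect{1}$ part). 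Taking square roots and combining the two estimates with the appropriate factor of $h$ on the $\Hexpovect{1}$ term yields the announced bound, up to absorbing the dimensional constant $\sqrt{d}$ into $c$.

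There is essentially no main obstacle here: the entire content lies in the scalar estimate of \cite[\S 5.1]{art-joyce-1}, whose proof handles the genuinely delicate points (the interaction between nodal interpolation on $\omhr$ and the lift $\Ghr$, the elementwise change of variables, the Bramble--Hilbert type argument on the reference simplex and the $r$-dependence). The only care needed in the present vector extension is to verify that the component-wise definition of $\Ihlifted$ is compatible with the Euclidean norms used on $\R^d$-valued functions, which is immediate from the definition.
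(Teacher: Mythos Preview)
Your proposal is correct. The paper does not supply a proof of this proposition: it states the result and relies on the scalar lifted interpolation estimate from \cite[\S 5.1]{art-joyce-1}, the vectorial operator $\Ihlifted$ being defined component-wise precisely so that the scalar estimate transfers directly. Your component-by-component reduction, summing the squared scalar bounds and absorbing the $\sqrt{d}$ factor into $c$, is exactly the intended (and only natural) argument.
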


Next, we present the continuity property of the lifted interpolation operator. 

\begin{lem}[The continuity property of the interpolation operator]
    There exists a constant $c>0$ mesh independent such that, 
    \begin{equation}
        \label{ineq:I-continue}
        \| \Ihlifted\v \|_{\Hexpovect{1}(\Omega)} \le c \|\v\|_{\Hexpovect{2} (\Omega)}, \qquad \forall \, \v \in \Hexpovect{2} (\Omega).
    \end{equation}
\end{lem}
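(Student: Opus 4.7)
The plan is to reduce this continuity statement to the already-stated interpolation error inequality in Proposition~\ref{prop:interpolation-ineq}, via a simple triangle inequality. For any $\v \in \Hexpovect{2}(\Omega)$, I would first write
$$
\|\Ihlifted \v\|_{\Hexpovect{1}(\Omega)} \le \|\Ihlifted \v - \v\|_{\Hexpovect{1}(\Omega)} + \|\v\|_{\Hexpovect{1}(\Omega)},
$$
so that the task is reduced to controlling the interpolation defect in the $\Hexpovect{1}(\Omega)$ norm by $\|\v\|_{\Hexpovect{2}(\Omega)}$.

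Next, I would invoke Proposition~\ref{prop:interpolation-ineq} with $m=2$ (which is admissible since $k\ge 1$, so $2 \le k+1$), yielding
$$
\|\v-\Ihlifted \v\|_{\Lvect2(\Omega)} + h\,\|\v-\Ihlifted \v\|_{\Hexpovect{1}(\Omega)} \le c\, h^{2}\, \|\v\|_{\Hexpovect{2}(\Omega)}.
$$
Dividing by $h$ (recall that $h>0$) gives in particular $\|\v-\Ihlifted \v\|_{\Hexpovect{1}(\Omega)} \le c\, h\, \|\v\|_{\Hexpovect{2}(\Omega)}$. Combining with the obvious embedding $\|\v\|_{\Hexpovect{1}(\Omega)} \le \|\v\|_{\Hexpovect{2}(\Omega)}$ and using that $h$ is bounded above (by the diameter of $\Omega$, or by the hypothesis that $h$ is sufficiently small), I obtain
$$
\|\Ihlifted \v\|_{\Hexpovect{1}(\Omega)} \le (c\,h + 1)\, \|\v\|_{\Hexpovect{2}(\Omega)} \le c\, \|\v\|_{\Hexpovect{2}(\Omega)},
$$
with a mesh-independent constant $c>0$, which is exactly \eqref{ineq:I-continue}.

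There is essentially no obstacle here: the result is a direct corollary of the interpolation estimate, and the only subtlety is to observe that the Sobolev injection $\Hexpo{k+1}(\Omega) \hookrightarrow \c0(\overline{\Omega})$ (already noted above the proposition, and applicable since $k\ge 1$ implies $k+1 \ge 2$) ensures that $\Ihlifted \v$ is well defined for $\v \in \Hexpovect{2}(\Omega)$, so the statement makes sense. No additional use of the lift transformation estimates \eqref{ineq:Gh-Id_Jh-1}--\eqref{ineq:Ghr-Id_1/Jh-1} is needed, since all the geometric content has already been absorbed into Proposition~\ref{prop:interpolation-ineq}.
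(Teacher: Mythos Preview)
Your argument is correct and is exactly the natural route: triangle inequality plus the interpolation estimate of Proposition~\ref{prop:interpolation-ineq} with $m=2$, together with the boundedness of $h$. The paper itself states this lemma without proof (it is treated as an immediate consequence of Proposition~\ref{prop:interpolation-ineq}, and indeed the paper later invokes the same $m=2$ instance in~\eqref{ineq:interpolation-v}), so there is nothing further to compare.
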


%
%
%
%
%
\subsubsection{Geometric error estimation}
The geometric error, represented by the difference between $a$ and $\ahell$, is evaluated in the following proposition.
\begin{proposition}
\label{prop:err-geom}
There exists a constant $c>0$ independent of $h$, such that the following inequality holds for any $\u, \v \in \Vhlifted$,
\begin{equation}
    \label{ineq:a-al2}
     |a(\u, \v)-\ahell(\u, \v)| \le c h^r \|\na \u\|_{\Lvect2(B_h^\ell)}\|\na \v\|_{\Lvect2(B_h^\ell)} + ch^{r+1}\|\u\|_{\Lvect2 (\Gamma)}\|\v\|_{\Lvect2(\Gamma)}, 
    \end{equation}
where $B_h^\ell$ is defined in \eqref{eq:b_h^ell}.
\end{proposition}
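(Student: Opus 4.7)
The plan is to subtract the explicit expressions of $a$ (from Proposition \ref{prop:a-express}) and $\ahell$ (given right after the definition of the lifted bilinear form) term by term and to estimate each of the four resulting contributions separately: three volume terms (two associated with $\muomega$ and one with $\lambdaomega$) and one boundary term.

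First I would deal with the surface contribution, which reads
$$
\int_{\Gamma} \u\cdot\v\,\ds - \int_{\Gamma} \u\cdot\v\,\tfrac{1}{\Jblifte}\,\ds = \int_{\Gamma} \u\cdot\v \,\bigl(1-\tfrac{1}{\Jblifte}\bigr)\,\ds.
$$
Applying the uniform bound \eqref{ineq:AhJh-part1} together with Cauchy--Schwarz immediately yields a control by $ch^{r+1}\|\u\|_{\Lvect2(\Gamma)}\|\v\|_{\Lvect2(\Gamma)}$, which is exactly the second term of \eqref{ineq:a-al2}.

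Next I would treat each volume term using the same pattern. Let $M$ be any of the bilinear maps appearing in the integrand, namely $M(A,B)=A:B$, $M(A,B)=A^{\transpose}:B$, or $M(A,B)=\Tr(A)\,\Tr(B)$. For such a bilinear form, the integrand difference can be decomposed as
$$
M(\na\u,\na\v) - M(\na\u\transGhr,\na\v\transGhr)\tfrac{1}{\Jhlifte}
= M(\na\u,\na\v)\bigl(1-\tfrac{1}{\Jhlifte}\bigr) + \bigl[M(\na\u,\na\v)-M(\na\u\transGhr,\na\v\transGhr)\bigr]\tfrac{1}{\Jhlifte}.
$$
The first summand is handled directly via the pointwise bound $|1-\tfrac{1}{\Jhlifte}|\le ch^r$ from \eqref{ineq:Ghr-Id_1/Jh-1}. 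For the second summand, I would exploit bilinearity by writing
$$
M(\na\u,\na\v)-M(\na\u\transGhr,\na\v\transGhr) = M\!\bigl(\na\u\,(\I-\transGhr),\na\v\bigr) + M\!\bigl(\na\u\transGhr,\na\v\,(\I-\transGhr)\bigr),
$$
which, thanks to the uniform boundedness of $\transGhr$ and $\tfrac{1}{\Jhlifte}$ together with $\|\transGhr-\I\|\le ch^r$, gives a pointwise bound by $ch^r\,|\na\u|\,|\na\v|$.

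At this point the key ingredient is \eqref{JDG}: both $\transGhr-\I$ and $\tfrac{1}{\Jhlifte}-1$ vanish identically outside $B_h^\ell$. This localises the integration of the volume difference to $B_h^\ell$, so that integration combined with Cauchy--Schwarz produces the bound $ch^r\,\|\na\u\|_{\Lvect2(B_h^\ell)}\,\|\na\v\|_{\Lvect2(B_h^\ell)}$. Summing the three volume contributions (each with its Lam\'e prefactor) and adding the previously obtained boundary estimate yields \eqref{ineq:a-al2}.

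The main obstacle is not conceptual but organisational: one must carefully handle three different bilinear forms $M$ (in particular the transposed-gradient term and the trace--trace term, where one needs identities such as $|\Tr(AG)|\le \sqrt{d}\|A\|\|G\|$) and check that each admits the same telescoping decomposition and the same pointwise bound. Once this uniform treatment is in place, localisation to $B_h^\ell$ via \eqref{JDG} and the sharp geometric estimates \eqref{ineq:Ghr-Id_1/Jh-1}--\eqref{ineq:AhJh-part1} deliver the result without further difficulty.
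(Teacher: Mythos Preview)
Your proposal is correct and follows essentially the same approach as the paper: the same four-term splitting (three volume contributions and one boundary contribution), the same telescoping of each volume integrand into a $(1-\tfrac{1}{\Jhlifte})$ piece plus two pieces carrying a factor $\transGhr-\I$, the same localisation to $B_h^\ell$ via \eqref{JDG}, and the same application of the pointwise bounds \eqref{ineq:Ghr-Id_1/Jh-1} and \eqref{ineq:AhJh-part1}. The only cosmetic difference is that you package the three volume terms through a single abstract bilinear map $M$, whereas the paper writes out the first one explicitly as $Q_1+Q_2+Q_3$ and then states that the transposed and trace--trace terms are handled identically.
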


\begin{proof}
Let $\u, \v \in \Vhlifted$.  We start by recalling the detailed expression of the bilinear form~$a$ given in Proposition \ref{prop:a-express} as follows,
\begin{equation*}
a(\u,\v)  = \muomega \( \int_{\Omega} \na\u : \na\v \, \dx + \int_{\Omega} (\na\u)^{\transpose} : \na\v \, \dx \) + \lambdaomega \int_{\Omega} \div (\u)  \,\div(\v) \, \dx + \int_{\Gamma} \u\cdot \v \, \ds,
\end{equation*}

By the definitions of the bilinear forms $a$ and $\ahell$, their difference can be written as follows,
\begin{equation*}
    |a(\u, \v)-\ahell(\u, \v)| \le \muomega \, \{ a_1(\u, \v) + a_2(\u, \v) \} + \lambdaomega \, a_3(\u, \v) 
    + \, a_4(\u, \v),
\end{equation*}
where the terms $a_i$, defined on $\Vhlifted\times\Vhlifted$, are respectively given by,
\begin{align*}
     a_1(\u, \v) &:=\displaystyle \left|\int_{\Omega} \na\u : \na\v \, \dx - \int_{\Omega} (\na \u \, \transGhr) : (\na\v \, \transGhr) \, \frac{1}{\Jhlifte}\, \dx\right|, \\
    a_2(\u, \v) &:=\displaystyle \left|\int_{\Omega} (\na\u)^{\transpose} : \na\v \, \dx - \int_{\Omega} (\na \u \, \transGhr)^{\transpose} : (\na\v \, \transGhr) \, \frac{1}{\Jhlifte}\, \dx\right|, \\
     a_3(\u, \v) &:=\displaystyle \left|\int_{\Omega} \Tr (\na\u) \, \Tr( \na\v) \dx - \int_{\Omega}  \Tr (\na \u \, \transGhr) \,  \Tr(\na\v \, \transGhr) \, \frac{1}{\Jhlifte}\, \dx\right|, \\
     a_4(\u, \v) &:= \displaystyle  \left|\int_{\Gamma} \u \cdot \v \ (1- \frac{1}{\Jblifte})\, \ds\right|, 
\end{align*}
where $\transGhr$ is defined in \eqref{notation:G-elasticity}. The next step is to bound each~$a_i$, for $i=1, \dots, 4$, while using the inequalities in \eqref{ineq:Ghr-Id_1/Jh-1} and in \eqref{ineq:AhJh-part1} where we bound $\| \transGhr - \I \|$, $| \frac{1}{\Jhlifte}-1 |$ and~$| \frac{1}{\Jblifte}-1|$ with respect to~$h$ and $r$. \medskip

First of all, we break down the first term as follows, $a_1(\u, \v) \le Q_1 +Q_2+Q_3$, 
where, 
\begin{eqnarray*}
     Q_1 &:=& \displaystyle \left|\int_{\Omega}  \bigg(\na \u \, ( \transGhr -\I) \bigg) : ( \na \v \, \transGhr) \,\frac{1}{\Jhlifte}\, \dx\right|,\\
     Q_2 &:=& \displaystyle \left|\int_{\Omega} \na \u : \bigg(\na \v\, ( \transGhr -\I) \bigg) \,\frac{1}{\Jhlifte}\, \dx\right|,\\
     Q_3 &:=& \displaystyle \left|\int_{\Omega}  \na \u : \na \v \, (\frac{1}{\Jhlifte}-1) \, \dx\right|.
\end{eqnarray*}
We recall that $\transGhr - \I = 0, \, \mbox{and} \, \frac{1}{\Jhlifte}-1 =0 \ \mbox{in} \ \Omega \backslash B_h^\ell$, as mentioned in \eqref{JDG}. Taking advantage of these equations, we apply the inequalities in \eqref{ineq:Ghr-Id_1/Jh-1} to estimate each $Q_j$ as follows,

\begin{align*}
    & Q_1 = \left|\int_{B_h^\ell} \bigg(\na \u \, ( \transGhr -\I) \bigg) : ( \na \v \, \transGhr) \, \frac{1}{\Jhlifte}\, \dx\right|\le ch^r \| \nabla \u\|_{\Lvect2(B_h^\ell)} \| \nabla \v\|_{\Lvect2(B_h^\ell)},\\
    & Q_2 = \left|\int_{B_h^\ell} \na \u : \bigg(\na \v\, ( \transGhr -\I) \bigg) \, \frac{1}{\Jhlifte}\, \dx\right| \le ch^r \| \nabla \u\|_{\Lvect2(B_h^\ell)} \| \nabla \v\|_{\Lvect2(B_h^\ell)},\\
    & Q_3 = \left|\int_{B_h^\ell}  \na \u : \na \v \,(\frac{1}{\Jhlifte}-1) \, \dx \right| \le ch^r \| \nabla \u\|_{\Lvect2(B_h^\ell)} \| \nabla \v\|_{\Lvect2(B_h^\ell)}.
\end{align*}
Summing up the latter terms, we get,
$a_1(\u,\v) \le ch^r \| \nabla \u\|_{\Lvect2(B_h^\ell)} \| \nabla \v\|_{\Lvect2(B_h^\ell)}.$ \medskip

\blue{In a similar manner, we break down $a_2$ and apply \eqref{JDG} and \eqref{ineq:Ghr-Id_1/Jh-1} to obtain,
$$a_2(\u,\v) \le ch^r \| \nabla \u\|_{\Lvect2(B_h^\ell)} \| \nabla \v\|_{\Lvect2(B_h^\ell)}.$$}

\blue{Next, we decompose $a_3$ as follows, $a_3(\u, \v) \le S_1 + S_2+ S_3$, 
where, 
\begin{eqnarray*}
     S_1 &:=& \displaystyle \left|\int_{\Omega}  \Tr \bigg(\na \u \, ( \transGhr -\I) \bigg) \, \Tr ( \na \v \, \transGhr)\, \frac{1}{\Jhlifte}\, \dx\right|,\\
     S_2 &:=& \displaystyle \left|\int_{\Omega} \Tr (\na \u) \, \Tr\bigg(\na \v\, ( \transGhr -\I) \bigg) \, \frac{1}{\Jhlifte}\, \dx\right|,\\
     S_3 &:=& \displaystyle \left|\int_{\Omega} \Tr(\na \u) \, \Tr (\na \v) \, \bigg(\frac{1}{\Jhlifte}-1\bigg) \, \dx\right|.
\end{eqnarray*}
Equation \eqref{JDG} alongside the estimates $\|\transGhr - \I\|$ and $|\frac{1}{\Jhlifte}-1|$ in \eqref{ineq:Ghr-Id_1/Jh-1} are respectively applied to estimate each~$S_j$ in order to obtain,
$$a_3(\u,\v) \le ch^r \| \nabla \u\|_{\Lvect2(B_h^\ell)} \| \nabla \v\|_{\Lvect2(B_h^\ell)}.$$} \medskip

Finally, $a_4$ can be bounded simply by using \eqref{ineq:AhJh-part1}, where we bound $|\frac{1}{\Jblifte}-1|$ as follows,
\begin{align*}
   a_4(\u , \v) = \left| \int_{\Gamma}\u \cdot \v \, \bigg(1- \frac{1}{\Jblifte}\bigg)\, \ds\right|  \le ch^{r+1} \|  \u\|_{\Lvect2(\Gamma)}\|  \v\|_{\Lvect2(\Gamma)}.
\end{align*}

To conclude, Inequality \eqref{ineq:a-al2} is easy to obtain when summing up $a_i$, for all~$i=1, \dots, 4$, since~$\muomega, \lambdaomega$ are mesh independent constants.
\end{proof}

\blue{The following lemma is a direct consequence of the geometric error bound established in Proposition~\ref{prop:err-geom}, and will be essential for the forthcoming proofs of the error estimates.}

\begin{lem}
    Denote $\u$ (resp. $\uh$) the solution of \eqref{fv_faible} (resp. \eqref{fvh-EV}). Then, for a sufficiently small $h$, there exists a constant $c>0$ is independent of $h$ such that, 
\begin{equation}
    \label{rem:bound-uhell-indep-h}
     \|\uhlifte \|_{\Hexpovect{1}(\Omega) } \le c\|\u \|_{\Hexpovect{1}(\Omega)},
\end{equation} 
where $\uhlifte$ is the lift of $\uh$.
\end{lem}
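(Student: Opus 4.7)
The plan is to test the lifted discrete equation against $\uhlifte$ itself and exploit the Galerkin-type identity \eqref{rem:a=ahell-For-vhell}, which states that $\ahell(\uhlifte,\vhlifte)=l(\vhlifte)=a(\u,\vhlifte)$ for every $\vhlifte\in\Vhlifted$. The whole estimate will then follow once two complementary facts are in hand: uniform (in $h$) coercivity of $\ahell$ on $\Vhlifted\subset\Hexpovect{1}(\Omega)$, and continuity of $a$ on $\Hexpovect{1}(\Omega)\times\Hexpovect{1}(\Omega)$.

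First I would record the continuity of $a$: starting from the explicit expression given in Proposition \ref{prop:a-express} and applying Cauchy--Schwarz together with the continuous trace inequality $\|\v\|_{\Lvect2(\Gamma)}\le c\|\v\|_{\Hexpovect{1}(\Omega)}$, one obtains
$$|a(\u,\v)|\le c\,\|\u\|_{\Hexpovect{1}(\Omega)}\|\v\|_{\Hexpovect{1}(\Omega)},\qquad \forall\,\u,\v\in\Hexpovect{1}(\Omega).$$
Coercivity of $a$ on $\Hexpovect{1}(\Omega)$, i.e.\ $a(\v,\v)\ge \alpha_0 \|\v\|_{\Hexpovect{1}(\Omega)}^2$ with $\alpha_0>0$ independent of $h$, is already known from Korn's inequality as invoked in Theorem \ref{th_existance_unicite_u}.

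The core step is the uniform coercivity of $\ahell$. I would write $\ahell(\v,\v)=a(\v,\v)-\bigl(a(\v,\v)-\ahell(\v,\v)\bigr)$ for an arbitrary $\v\in\Vhlifted$ and invoke the geometric error estimate of Proposition \ref{prop:err-geom} with $\u=\v$, which gives
$$|a(\v,\v)-\ahell(\v,\v)|\le c h^r \|\na\v\|_{\Lvect2(B_h^\ell)}^2 + c h^{r+1}\|\v\|_{\Lvect2(\Gamma)}^2.$$
Bounding $\|\na\v\|_{\Lvect2(B_h^\ell)}\le\|\na\v\|_{\Lvect2(\Omega)}$ trivially and controlling the boundary term via the trace inequality yields $|a(\v,\v)-\ahell(\v,\v)|\le c h^r\|\v\|_{\Hexpovect{1}(\Omega)}^2$. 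Combined with the coercivity of $a$, this produces, for $h$ sufficiently small (so that $c h^r\le \alpha_0/2$),
$$\ahell(\v,\v)\ge (\alpha_0 - c h^r)\|\v\|_{\Hexpovect{1}(\Omega)}^2 \ge \tfrac{\alpha_0}{2}\|\v\|_{\Hexpovect{1}(\Omega)}^2.$$

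With these two ingredients, the conclusion is immediate: substituting $\vhlifte=\uhlifte$ in the lifted discrete formulation and applying \eqref{rem:a=ahell-For-vhell} gives
$$\tfrac{\alpha_0}{2}\|\uhlifte\|_{\Hexpovect{1}(\Omega)}^2 \le \ahell(\uhlifte,\uhlifte)=a(\u,\uhlifte)\le c\,\|\u\|_{\Hexpovect{1}(\Omega)}\|\uhlifte\|_{\Hexpovect{1}(\Omega)},$$
and dividing by $\|\uhlifte\|_{\Hexpovect{1}(\Omega)}$ yields \eqref{rem:bound-uhell-indep-h}. The only real obstacle is the uniform coercivity: one must make sure that the geometric perturbation $a-\ahell$ is controlled by the $\Hexpovect{1}(\Omega)$-norm with a prefactor tending to zero with $h$, which is precisely what Proposition \ref{prop:err-geom} combined with the trace inequality delivers.
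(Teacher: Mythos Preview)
Your proof is correct and follows essentially the same approach as the paper: both use the coercivity of $a$, the identity \eqref{rem:a=ahell-For-vhell}, the geometric error bound of Proposition~\ref{prop:err-geom}, and the continuity of $a$, absorbing the $ch^r\|\uhlifte\|_{\Hexpovect{1}(\Omega)}^2$ perturbation for small $h$. The only cosmetic difference is that you first isolate the uniform coercivity of $\ahell$ on $\Vhlifted$ as a separate intermediate step, whereas the paper carries out the same computation directly on $\uhlifte$ without naming that property; the ingredients and the resulting inequality $(1-ch^r)\|\uhlifte\|_{\Hexpovect{1}(\Omega)}\le c\|\u\|_{\Hexpovect{1}(\Omega)}$ are identical.
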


\begin{proof}
Indeed, a relatively easy way to prove Inequality \eqref{rem:bound-uhell-indep-h} is by using the geometric error estimation~\eqref{ineq:a-al2}, we have,
\begin{align*}
   c_c \|\uhlifte \|_{\Hexpovect{1}(\Omega)}^2  \le a(\uhlifte,\uhlifte)
   \le a(\uhlifte,\uhlifte)-a(\u,\uhlifte) + a(\u,\uhlifte),
\end{align*}
where $c_c$ denotes the coercivity constant. Using Equality \eqref{rem:a=ahell-For-vhell}, we get,
\begin{equation*}
    c_c \|\uhlifte \|_{\Hexpovect{1}(\Omega)}^2 \le a(\uhlifte,\uhlifte)-a_h^\ell(\uhlifte,\uhlifte) + a(\u,\uhlifte)
    = (a-a_h^\ell)(\uhlifte,\uhlifte) + a(\u,\uhlifte).
\end{equation*}
Then we apply the geometric error estimation \eqref{ineq:a-al2} along with the continuity of $a$ with respect to the $\Hexpovect{1}(\Omega)$ norm as follows,
\begin{align*}
    \|\uhlifte \|_{\Hexpovect{1}(\Omega)}^2  &\le c h^r \|\na \uhlifte\|^2_{\Lvect2(B_h^\ell)}+ ch^{r+1}\|\uhlifte\|_{\Lvect2 (\Gamma)}^2+ c\|\u \|_{\Hexpovect{1}(\Omega)} \| \uhlifte\|_{\Hexpovect{1}(\Omega)} \\
    & \le c h^r \|\uhlifte\|^2_{\Hexpovect{1}(\Omega)}+ c\|\u \|_{\Hexpovect{1}(\Omega)} \| \uhlifte\|_{\Hexpovect{1}(\Omega)}.
\end{align*}
Thus, we have, 
$$
   (1-ch^r) \|\uhlifte \|_{\Hexpovect{1}(\Omega)}^2
    \le c\|\u \|_{\Hexpovect{1}(\Omega) } \| \uhlifte\|_{\Hexpovect{1}(\Omega)}.
$$
For a sufficiently small $h$, we have $1-ch^r\ge \epsilon$, for a given $\epsilon>0$, which concludes the proof.
\end{proof}
\subsection{Proof of the $\Hexpovect{1}$ error bound in Theorem \ref{th-error-bound}}
\label{sec:h1-err}
To begin with, we need to notice that the error $ \| \u-\uhlifte\|_{\Hexpovect{1}(\Omega) } $ can be separated as follows using the interpolation operator $\Ihlifted$, 
\begin{equation}
\label{proof:eH1}
     \| \u-\uhlifte\|_{\Hexpovect{1}(\Omega) } 
     \le \| \u-\Ihlifted \u\|_{\Hexpovect{1}(\Omega) } + \|\Ihlifted \u-\uhlifte\|_{\Hexpovect{1}(\Omega) }.
\end{equation}
We proceed by bounding each term separately.\medskip

The first term can be bounded using the interpolation inequality given in Proposition \ref{prop:interpolation-ineq} as follows,
\begin{equation}
\label{proof:interpolation}
\| \u-\Ihlifted \u\|_{\Hexpovect{1}(\Omega)} \le c h^k \| \u\|_{\Hexpovect{k+1}(\Omega)}.
\end{equation}

Afterwards, we proceed by bounding the remaining term.  Since the bilinear form $a$ is coercive with respect to the norm of $\Hexpovect{1}(\Omega)$, denoting~$c_c$ as the coercivity constant, we have,
\begin{multline*}
    c_c \|\Ihlifted \u -\uhlifte\|_{\Hexpovect{1}(\Omega)}^2 \le a(\Ihlifted \u -\uhlifte,\Ihlifted \u -\uhlifte) 
    = a(\Ihlifted \u ,\Ihlifted \u -\uhlifte) -a(\uhlifte,\Ihlifted \u -\uhlifte)   \\
     = a(\Ihlifted \u -\u ,\Ihlifted \u -\uhlifte) + a( \u ,\Ihlifted \u -\uhlifte) - a(\uhlifte,\Ihlifted \u -\uhlifte),
\end{multline*}
where in the latter equation, we added and subtracted $a( \u ,\Ihlifted \u -\uhlifte)$. Afterwards, we apply Equation \eqref{rem:a=ahell-For-vhell}, for $\v=\Ihlifted \u -\uhlifte \in \Vhlifted$, 
\begin{equation*}
   c_c \|\Ihlifted \u -\uhlifte\|_{\Hexpovect{1}(\Omega)}^2 \le |a(\Ihlifted \u -\u ,\Ihlifted \u -\uhlifte)| + \big| \ahell-a\big|(\uhlifte,\Ihlifted \u -\uhlifte).
\end{equation*} 
Taking advantage of the continuity of $a$ and the geometric estimate \eqref{ineq:a-al2}, we obtain,
\begin{equation*}
\begin{array}{l}
 c_c \|\Ihlifted \u -\uhlifte\|_{\Hexpovect{1}(\Omega)}^2\\[0.1cm]
 \begin{array}{rcl}
 & \le & \!\!\!\displaystyle c \big(  h^r \|\na \uhlifte\|_{\Lvect2(B_h^\ell)}\|\na (\Ihlifted \u-\uhlifte)\|_{\Lvect2(B_h^\ell)}  + h^{r+1}\|\uhlifte\|_{\Lvect2(\Gamma)}\|\Ihlifted \u-\uhlifte\|_{\Lvect2(\Gamma)}\big) \!\!\! \\[0.1cm]
  & & \displaystyle  \qquad{ }  + c_{cont} \|\Ihlifted \u -\u\|_{\Hexpovect{1}(\Omega)} \|\Ihlifted \u -\uhlifte\|_{\Hexpovect{1}(\Omega)} \\[0.1cm]
  & \le &\!\!\! \displaystyle  c \big( h^r \|\na \uhlifte\|_{\Lvect2(B_h^\ell)} + h^{r+1}\|\uhlifte\|_{\Lvect2(\Gamma)}  +  \|\Ihlifted \u -\u\|_{\Hexpovect{1}(\Omega)} \big) \|\Ihlifted \u -\uhlifte\|_{\Hexpovect{1}(\Omega)}.
\end{array}
\end{array}
\end{equation*}
Then, dividing by $\|\Ihlifted \u -\uhlifte\|_{\Hexpovect{1}(\Omega)}$, we have, 
\begin{align}
\label{proof:second-term}
    \|\Ihlifted \u -\uhlifte\|_{\Hexpovect{1}(\Omega)} & \le c \big( h^r \|\na \uhlifte\|_{\Lvect2(B_h^\ell)} + h^{r+1}\|\uhlifte\|_{\Lvect2(\Gamma)}  +  \|\Ihlifted \u -\u\|_{\Hexpovect{1}(\Omega)} \big).
\end{align}

To conclude, we replace Inequality \eqref{proof:second-term} in the error estimation \eqref{proof:eH1} as follows, 
\begin{equation*}
     \| \u-\uhlifte\|_{\Hexpovect{1}(\Omega)} 
       \le c \big( h^r \|\na \uhlifte\|_{\Lvect2(B_h^\ell)} + h^{r+1}\|\uhlifte\|_{\Lvect2(\Gamma)}  + \|\u-\Ihlifted \u\|_{\Hexpovect{1}(\Omega)} \big).  
\end{equation*}
Lastly, applying \eqref{proof:interpolation}, we arrive at, 
\begin{equation*}
\begin{array}{l}
\| \u-\uhlifte\|_{\Hexpovect{1}(\Omega)}\\[0.1cm]
      \le  c \big( h^r \|\na \uhlifte\|_{\Lvect2(B_h^\ell)} + h^{r+1}\|\uhlifte\|_{\Lvect2(\Gamma)}  +h^k\|\u\|_{\Hexpovect{k+1}(\Omega)} \big) \\[0.1cm]
      \le  c h^r (\|\na (\u-\uhlifte)\|_{\Lvect2(B_h^\ell)}+  \| \na \u\|_{\Lvect2(B_h^\ell)}) + ch^{r+1}\|\uhlifte\|_{\Lvect2(\Gamma)}  +ch^k\|\u\|_{\Hexpovect{k+1}(\Omega)} \\[0.1cm]
      \le c h^r (\|\u-\uhlifte\|_{\Hexpovect{1}(\Omega)}+ h^{1/2} \|\u\|_{\Hexpovect{2}(\Omega)}) + ch^{r+1}\|\uhlifte\|_{\Lvect2(\Gamma)}  +ch^k\|\u\|_{\Hexpovect{k+1}(\Omega)},
\end{array}
\end{equation*}
where we also used \eqref{h1/2_blh} to gain $h^{1/2} \|\u\|_{\Hexpovect{2}(\Omega)}$. 
Thus we have,
\begin{align*}
    (1-c h^r) \|\u-\uhlifte\|_{\Hexpovect{1}(\Omega)} & \le c \left( h^{r+1/2}\|\u\|_{\Hexpovect{2}(\Omega)} + h^{r+1}\|\uhlifte\|_{\Lvect2(\Gamma)}  +h^k\|\u\|_{\Hexpovect{k+1}(\Omega)} \right). 
\end{align*}
For a sufficiently small $h$, we arrive at,
\begin{align*}
    \|\u-\uhlifte\|_{\Hexpovect{1}(\Omega)} & \le c \left( h^{r+1/2}\|\u\|_{\Hexpovect{2}(\Omega)} + h^{r+1}\|\uhlifte\|_{\Lvect2(\Gamma)}  +h^k\|\u\|_{\Hexpovect{k+1}(\Omega)} \right).
\end{align*}
This provides the desired result using Inequality \eqref{rem:bound-uhell-indep-h}.

\subsection{Proof of the $\Lvect2$ error bound in Theorem \ref{th-error-bound}}
\label{sec:L2-err}

To estimate the $\Lvect2$ norm of the error, we define the functional~$F_h$ by,
$$
\fonction{F_h }{\Hexpovect{1}(\Omega)}{\R}{\v}{\displaystyle   F_h(\v)=a(\u-\uhlifte,\v).}
$$
\blue{We begin by bounding $|F_h(\v)|$ for any $\v \in \Hexpovect{2} (\Omega)$ in Lemma \ref{lem:Fh}. Afterwards an Aubin-Nitsche argument is applied in order to prove the $\Lvect2$ error estimation \eqref{errh1_errl2}, following the same approach as in the scalar case discussed in \cite{art-joyce-1}. For the sake of clarity, we point out that the main challenge in this framework consists in ensuring that the various estimates continue to hold in the context of linear elasticity with their adequate norms involving vector-valued functions.}
\begin{lem}
\label{lem:Fh}
For a sufficiently small~$h$, there exists a mesh independent constant~$c>0$ such that the following inequality holds for any $\v \in \Hexpovect{2}(\Omega)$,
\begin{equation}
\label{ineq:F-h}
     |F_h(\v)| \le c ( h^{k+1} + h^{r+1} ) \|\v\|_{\Hexpovect{2}(\Omega)}. 
\end{equation}
\end{lem}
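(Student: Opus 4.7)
The plan is to apply a Galerkin–orthogonality argument of Aubin--Nitsche type, adapted so as to absorb the geometric mismatch between the continuous bilinear form $a$ and its lifted discrete counterpart $\ahell$. Given $\v \in \Hexpovect{2}(\Omega)$, I would introduce the lifted interpolant $\Ihlifted \v \in \Vhlifted$ and split
$$F_h(\v) \;=\; a(\u - \uhlifte,\, \v - \Ihlifted \v) \;+\; a(\u - \uhlifte,\, \Ihlifted \v).$$

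The first summand is the easy one: continuity of $a$ on $\Hexpovect{1}(\Omega)$, combined with the $\Hexpovect{1}$ error estimate already proved in Section~\ref{sec:h1-err} and the interpolation inequality of Proposition~\ref{prop:interpolation-ineq} with $m=2$ (which gives $\|\v - \Ihlifted \v\|_{\Hexpovect{1}(\Omega)} \le ch\|\v\|_{\Hexpovect{2}(\Omega)}$), bounds this summand by $c(h^{k+1}+h^{r+3/2})\|\v\|_{\Hexpovect{2}(\Omega)}$. Since $h$ is small this is absorbed into the target bound $c(h^{k+1}+h^{r+1})\|\v\|_{\Hexpovect{2}(\Omega)}$.

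For the second summand, the geometric error enters. Because $\Ihlifted \v \in \Vhlifted$, Equation~\eqref{rem:a=ahell-For-vhell} yields $a(\u,\Ihlifted \v) = \ahell(\uhlifte,\Ihlifted \v)$, whence
$$a(\u - \uhlifte,\Ihlifted \v) \;=\; (\ahell - a)(\uhlifte,\Ihlifted \v),$$
so Proposition~\ref{prop:err-geom} applies directly. It produces a volume contribution of order $h^r \|\na \uhlifte\|_{\Lvect2(B_h^\ell)}\|\na \Ihlifted \v\|_{\Lvect2(B_h^\ell)}$ and a boundary contribution of order $h^{r+1}\|\uhlifte\|_{\Lvect2(\Gamma)}\|\Ihlifted \v\|_{\Lvect2(\Gamma)}$. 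The boundary part is immediate: the trace inequality together with the uniform control~\eqref{rem:bound-uhell-indep-h} on $\|\uhlifte\|_{\Hexpovect{1}(\Omega)}$ and the continuity~\eqref{ineq:I-continue} of $\Ihlifted$ yield $ch^{r+1}\|\v\|_{\Hexpovect{2}(\Omega)}$ without further work.

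The main obstacle, and the reason this argument delivers $h^{r+1}$ rather than merely $h^r$ in the volume term, is the need to extract an extra $h^{1/2}$ from each of the two $B_h^\ell$-gradient norms. For $\na \Ihlifted \v$, the splitting $\na \Ihlifted \v = \na \v + \na(\Ihlifted \v - \v)$ handles the matter: the first part contributes $ch^{1/2}\|\v\|_{\Hexpovect{2}(\Omega)}$ via~\eqref{h1/2_blh} since $\v \in \Hexpovect{2}(\Omega)$, while the second is globally of order $h$ by Proposition~\ref{prop:interpolation-ineq}. For $\na \uhlifte$, the analogous splitting $\na \uhlifte = \na \u + \na(\uhlifte - \u)$ works: the smooth part $\na \u$ contributes $ch^{1/2}\|\u\|_{\Hexpovect{2}(\Omega)}$ on $B_h^\ell$ via~\eqref{h1/2_blh}, while the discrete correction $\na(\uhlifte - \u)$ is absorbed globally using the $\Hexpovect{1}$ estimate already proved (which for $k,r\ge 1$ is itself $O(h^{1/2})$). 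Multiplying $h^r \cdot h^{1/2} \cdot h^{1/2} = h^{r+1}$ and collecting all contributions then yields the claimed bound~\eqref{ineq:F-h}.
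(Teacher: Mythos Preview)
Your proposal is correct and follows essentially the same route as the paper's proof: the same two-term splitting via $\Ihlifted \v$, the same use of \eqref{rem:a=ahell-For-vhell} to rewrite the second term as $(\ahell-a)(\uhlifte,\Ihlifted\v)$, and the same mechanism of extracting an extra $h^{1/2}$ from each gradient factor on $B_h^\ell$ by splitting off the smooth part ($\na\u$, $\na\v$) and invoking \eqref{h1/2_blh}. One small remark: your claim that the $\Hexpovect{1}$ error is ``$O(h^{1/2})$ for $k,r\ge 1$'' is correct but looser than necessary---it is in fact $O(h)$---though $O(h^{1/2})$ is all the argument requires.
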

We start by summarizing the essential ingredients to prove this lemma.
The interpolation inequality in Proposition \ref{prop:interpolation-ineq} implies that, 
\begin{equation}
    \label{ineq:interpolation-v}
    \| \Ihlifted \v - \v \|_{\Hexpovect{1} (\Omega)} \le c h \|\v\|_{\Hexpovect{2} (\Omega)}, \quad \forall \, \v \in \Hexpovect{2}(\Omega).
\end{equation}
Moreover, applying Equality \eqref{rem:a=ahell-For-vhell} for $\Ihlifted \v \in \Vhlifted$, we have, 
\begin{equation}
    \label{eq:a=ahell-For-Iv}
    a(\u,\Ihlifted \v) = l(\Ihlifted \v) = \ahell (\uhlifte, \Ihlifted \v).
\end{equation}
\begin{proof}[Proof of Lemma \ref{lem:Fh}]%
Consider $\v \in \Hexpovect{2}(\Omega)$. To begin with, we decompose $|F_h(\v)|$ in two terms as follows,
\begin{multline*}
    |F_h(\v)|  = |a(\u-\uhlifte, \v)| 
           = |a(\u-\uhlifte, \v) +a(\u-\uhlifte, \Ihlifted \v )-a(\u-\uhlifte, \Ihlifted \v)| \\
          \le |a(\u-\uhlifte, \v- \Ihlifted \v)| + | a(\u-\uhlifte, \Ihlifted \v)| 
          =: F_1 + F_2.
\end{multline*}

Firstly, to bound $F_1$, we take advantage of the continuity of the bilinear form $a$ with respect to the norm $\|\cdot\|_{\Hexpovect{1}(\Omega)}$ and apply the $\Hexpovect{1}(\Omega)$ error estimation \eqref{errh1_errl2} as follows,  
\begin{multline*}
    F_1  
         \le c_{cont} \, \| \u-\uhlifte\|_{\Hexpovect{1}(\Omega)} \| \v-\Ihlifted \v \|_{\Hexpovect{1}(\Omega)}  
         \le c ( h^k + h^{r+1/2} ) \, h \|\v\|_{\Hexpovect{2} (\Omega)} 
         \le c ( h^{k+1} + h^{r+3/2} ) \, \|\v\|_{\Hexpovect{2} (\Omega)},
\end{multline*}
where we used Inequality~\eqref{ineq:interpolation-v}. \medskip

Secondly, to estimate $F_2$, we apply that Equality \eqref{eq:a=ahell-For-Iv} with the geometric error estimation \eqref{ineq:a-al2} as follows,
\begin{multline*}
      F_2 = |a(\u, \Ihlifted \v)-a(\uhlifte, \Ihlifted \v)| 
         = |\ahell(\uhlifte, \Ihlifted \v)-a(\uhlifte, \Ihlifted \v)| 
         = |(\ahell-a)(\uhlifte, \Ihlifted \v)| \\
         \le c h^r \|\na \uhlifte\|_{\Lvect2(B_h^\ell)}\|\na (\Ihlifted \v)\|_{\Lvect2(B_h^\ell)} +c h^{r+1}\|\uhlifte \|_{\Lvect2(\Gamma)} \| \Ihlifted \v\|_{\Lvect2(\Gamma)}.
\end{multline*} 
Next, we will bound the first term in the latter inequality separately, as follows,
\begin{eqnarray*}
   F_3 & := & h^r \|\na \uhlifte\|_{\Lvect2(B_h^\ell)}\|\na (\Ihlifted \v)\|_{\Lvect2(B_h^\ell)} \\
       & \le & h^r \Big( \|\na (\uhlifte-\u) \|_{\Lvect2(B_h^\ell)} + \|\na \u\|_{\Lvect2(B_h^\ell)} \Big) \Big( \|\na (\Ihlifted \v - \v)\|_{\Lvect2(B_h^\ell)}+\|\na \v\|_{\Lvect2(B_h^\ell)} \Big)\\
       & \le & h^r \Big( \| \uhlifte-\u \|_{\Hexpovect{1} (\Omega)} + \|\na \u\|_{\Lvect2(B_h^\ell)} \Big) \Big( \|\Ihlifted \v-\v\|_{\Hexpovect{1} (\Omega)}+\|\na \v\|_{\Lvect2(B_h^\ell)} \Big).
\end{eqnarray*}
We now apply respectively the $\Hexpovect{1}(\Omega)$ error estimation \eqref{errh1_errl2}, Inequality~\eqref{h1/2_blh} and the interpolation inequality \eqref{ineq:interpolation-v}, as follows,
\begin{eqnarray*}
   F_3 & \le & c \, h^r \Big( h^k + h^{r+1/2} + h^{1/2} \| \u\|_{\Hexpovect{2}(\Omega)} \Big) \Big( h\|\v\|_{\Hexpovect{2} (\Omega)}+h^{1/2}\| \v\|_{\Hexpovect{2}(\Omega)} \Big)\\
   & \le & c \, h^r \, h^{1/2} \Big( h^{k-1/2} + h^{r} + \| \u\|_{\Hexpovect{2}(\Omega)} \Big) \Big( h^{1/2}+1\Big)\, h^{1/2} \| \v\|_{\Hexpovect{2} (\Omega)} \\
    & \le & c \, h^{r+1} \Big( h^{k-1/2} + h^{r} + \| \u\|_{\Hexpovect{2}(\Omega)} \Big) \Big( h^{1/2}+1\Big)\, \| \v\|_{\Hexpovect{2} (\Omega)}.
\end{eqnarray*}
Noticing that $k-1/2>0$ (since $k\ge 1$) and that $\Big( h^{k-1/2} + h^{r} + \| \u\|_{\Hexpovect{2}(\Omega)} \Big) \Big( h^{1/2}+1\Big)$ is bounded by a constant independent of $h$, we obtain,
\begin{equation*}
   F_3 \le c \, h^{r+1} \| \v\|_{\Hexpovect{2} (\Omega)}.
\end{equation*}
Replacing the latter estimation in the expression of $F_2$ and {by the trace inequality}, we have, 
$$
    F_2  \le  c  h^{r+1} \|\v\|_{\Hexpovect{2} (\Omega)} +c h^{r+1}\|\uhlifte \|_{\Hexpovect{1}(\Omega)} \| \Ihlifted \v\|_{\Hexpovect{1}(\Omega)}.
$$
Moreover, using Inequality \eqref{ineq:I-continue}, which states that $\| \Ihlifted \v\|_{\Hexpovect{1}(\Omega)} \le c \| \v\|_{\Hexpovect2(\Omega)}$, and by applying Inequality \eqref{rem:bound-uhell-indep-h}, we get,
$$
   F_2   \le  c  h^{r+1} \| \v\|_{\Hexpovect2(\Omega)} +c h^{r+1}\|\u \|_{\Hexpovect{1}(\Omega)}\| \v\|_{\Hexpovect2(\Omega)} 
   \le  c h^{r+1} \| \v\|_{\Hexpovect2(\Omega)}.
$$

We conclude the proof by summing the estimates of $F_1$ and~$F_2$.
\end{proof}
We can now prove the $\Lvect2$ estimate \eqref{errh1_errl2}, using an Aubin–Nitsche duality argument.
\begin{proof}[\textbf{Proof of the $\Lvect2$ estimate \eqref{errh1_errl2}.}]
Defining $\e := \u-\uhlifte \in \Hexpovect{1}(\Omega)$, we aim  to estimate the $\Lvect2$ error norm: $\|\e\|_{\Lvect2\omgam}^2 = \|\u-\uhlifte\|_{\Lvect2(\Omega)}^2 + \|\u-\uhlifte\|_{\Lvect2(\Gamma)}^2.$
In order to do that, an Aubin–Nitsche duality argument is used. We apply Theorem~\ref{th_existance_unicite_u} for $\f=\e$ and~$\g= \e_{|_{\Gamma}}$ as follows: there exists a unique solution $\mathbf{z_{e}} \in \Hexpovect{2}(\Omega)$ to Problem \eqref{fv_faible}. By the regularity assumptions considered, $\mathbf{z_{e}}$ satisfies Inequality \eqref{ineq:energy} as follows,
\begin{equation}
\label{reg}
    \|\mathbf{z_e}\|_{\Hexpovect{2}(\Omega)} \le c \|\e\|_{\Lvect2\omgam}.
\end{equation}
Notice that, 
$$
    \|\e\|_{\Lvect2\omgam}^2=a(\e,\mathbf{z_e})=|F_h(\mathbf{z_e})|.
$$
Applying Inequality \eqref{ineq:F-h} and Inequality \eqref{reg}, we have, \blue{
$$
 \|\e\|_{\Lvect2\omgam}^2\le c ( h^{k+1} + h^{r+1} ) \|\mathbf{z_e}\|_{\Hexpovect2(\Omega)} \le c ( h^{k+1} + h^{r+1} ) \|\e\|_{\Lvect2\omgam},
$$}
which concludes the proof.

\end{proof}

\appendix

\section{Affine and exact mesh definition}
\label{mesh:appendix}

The constructions of the mesh used and of the lift procedure presented in Section \ref{sec:mesh-lift} are based on the following fundamental results that may be found in \cite{tubneig} and \cite[\S 14.6]{GT98}. For more details, we refer to \cite[Ch. 3]{these-J.GH}.
%
%
%
%
%
%
%
%
\begin{proposition}
\label{tub_neigh_orth_proj_prop}
Let $\Omega$ be a nonempty bounded connected open subset of $\R^{d}$ 
with a~$\c2$ boundary~$\Gamma= \partial \Omega$. Let $\d : \R^d \to \R$ be the signed distance function with respect to $\Gamma$ defined by,
\begin{displaymath}
  \d(x) :=
   \{
    \begin{array}{ll}
      -\dist(x, \Gamma)&  {\rm if } \, x \in \Omega ,
      \\
      0&  {\rm if } \, x \in \Gamma ,
      \\
      \dist(x, \Gamma)&  {\rm otherwise},
    \end{array}
  \right. \qquad 
  {\rm with} \quad \dist(x, \Gamma) := \inf \{|x-y|,~ \ y \in \Gamma \}.
\end{displaymath}
Then there exists a tubular neighborhood $\mathcal{U}_{\Gamma}:= \{ x \in \R^d ; |\d(x)| < \delta_\Gamma \}$ of $\Gamma$, of sufficiently small width $\delta_\Gamma$, where {$\d$ is a $\c2$ function}. Its gradient 
$\na \d$ is an extension of the external unit normal~$\nn$ to $\Gamma$. Additionally, in this neighborhood~$\mathcal{U}_{\Gamma}$, the orthogonal projection~$b$ onto $\Gamma$ is uniquely defined and given by,
\begin{displaymath} 
b\, :~ x \in \mathcal{U}_{\Gamma}  
\longmapsto    b(x):=x-\d(x)\na \d(x) \in \Gamma.
\end{displaymath}
\end{proposition}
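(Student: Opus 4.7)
The plan is to establish the result locally via the inverse function theorem, then globalize using the compactness of $\Gamma$. First, I fix $x_0 \in \Gamma$ and invoke the $\c 2$-regularity of $\Gamma$ to produce a local parametrization $\varphi \in [\c 2(U)]^d$ on an open set $U \subset \R^{d-1}$ with $\varphi(u_0) = x_0$ for some $u_0 \in U$. The outer unit normal $\nn \circ \varphi$ is then $\c 1$ on $U$, so the map
\begin{equation*}
  \Phi(u, t) := \varphi(u) + t\, \nn(\varphi(u)), \qquad (u,t) \in U \times \R,
\end{equation*}
is $\c 1$ from $U \times \R$ to $\R^d$. At $(u_0, 0)$, the columns of $\diff \Phi$ are the $d-1$ tangent vectors $\partial_i \varphi(u_0)$ together with $\nn(x_0)$, and these form a basis of $\R^d$. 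The inverse function theorem then yields a $\c 1$-diffeomorphism between a neighborhood of $(u_0, 0)$ and a neighborhood of $x_0$ in $\R^d$. Its two components provide, locally, the orthogonal projection onto $\Gamma$ and the signed normal coordinate.

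Next, I would globalize. For each $x_0 \in \Gamma$ the previous construction supplies a local width $\delta(x_0) > 0$ such that the decomposition $x = y + t\,\nn(y)$ is valid and unique on a neighborhood of $x_0$ restricted to $\{|\d| < \delta(x_0)\}$. Compactness of $\Gamma$ allows to extract a finite subcover and to choose $\delta_\Gamma$ smaller than the minimum of the $\delta(x_0)$'s, and further reduced so that normal segments emanating from distinct boundary points within distance $\delta_\Gamma$ cannot intersect; this last reduction relies on the uniform bound on the principal curvatures provided by the $\c 2$-regularity of $\Gamma$. The result is a globally defined $\c 1$-diffeomorphism
\begin{equation*}
 (y, t) \in \Gamma \times (-\delta_\Gamma, \delta_\Gamma) \ \longmapsto \ y + t\,\nn(y) \in \mathcal{U}_\Gamma,
\end{equation*}
whose inverse simultaneously defines $b$ and $\d$ on $\mathcal{U}_\Gamma$.

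For the formula $b(x) = x - \d(x)\na \d(x)$, I would first show that any minimizer $y \in \Gamma$ of $z \mapsto |x-z|^2$ satisfies the first-order optimality condition $x - y \perp T_y\Gamma$, so that $x - y$ is parallel to $\nn(y)$; uniqueness of the decomposition produced above then forces $y = b(x)$ and the scalar coefficient to equal $\d(x)$ (with the correct sign from the orientation of $\nn$), giving $b(x) = x - \d(x)\,\nn(b(x))$. To identify $\nn(b(x))$ with $\na\d(x)$, I would differentiate the eikonal identity $|\na\d|^2 \equiv 1$ along integral curves of $\na\d$: this shows that $\na\d$ is constant along each such curve, and since $\na\d = \nn$ on $\Gamma$, one gets $\na\d(x) = \nn(b(x))$ throughout $\mathcal{U}_\Gamma$.

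The main obstacle is the upgrade of the regularity of $\d$ from $\c 1$ to $\c 2$: the inverse function theorem applied to $\Phi$ only produces a $\c 1$-diffeomorphism, because $\nn$ is only $\c 1$. To reach $\c 2$ regularity of $\d$, one must exploit the eikonal equation $|\na\d|=1$ and the identity $\na\d = \nn \circ b$ to bootstrap regularity component by component, or invoke directly the classical statement of \cite[\S 14.6]{GT98} (as indeed the excerpt does). Once $\d \in \c 2(\mathcal{U}_\Gamma)$ is known, the remaining assertions follow from the arguments sketched above.
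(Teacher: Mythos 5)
The paper itself contains no proof of this proposition: it is stated as a quoted classical result, with pointers to \cite{tubneig}, \cite[\S 14.6]{GT98} and the author's thesis, so there is no in-paper argument to compare yours against. Your sketch reconstructs the standard proof found in those references (local $\c2$ parametrization, inverse function theorem applied to $\Phi(u,t)=\varphi(u)+t\,\nn(\varphi(u))$, compactness of $\Gamma$ to obtain a uniform width, first-order optimality for the nearest-point projection, and the identity $\na\d=\nn\circ b$), and the overall architecture is the right one.

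Two steps need repair. The more serious one is a circularity in the order of your last two paragraphs: you derive $\na\d(x)=\nn(b(x))$ by differentiating $|\na\d|^2\equiv1$ along integral curves of $\na\d$, which requires $\d$ to be twice differentiable --- but the $\c2$ regularity of $\d$ is precisely what you defer to the final paragraph, and the bootstrap you propose there ($\na\d=\nn\circ b$ with $\nn\in\c1$ and $b\in\c1$, hence $\na\d\in\c1$, hence $\d\in\c2$) itself consumes the identity you are trying to prove. The standard way out is to establish $\na\d=\nn\circ b$ using only $\c1$ information: uniqueness of the decomposition gives $\d\big(y+s\,\nn(y)\big)=s$ for $|s|<\delta_\Gamma$, so $\na\d(x)\cdot\nn(b(x))=1$; since $\d$ is $1$-Lipschitz, $|\na\d|\le1$, and equality in Cauchy--Schwarz forces $\na\d(x)=\nn(b(x))$. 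With that in hand your bootstrap closes and yields $\d\in\c2$. The second, milder issue is that the uniform bound on the principal curvatures only prevents normal segments issued from \emph{nearby} boundary points from crossing (focal points lie at distance at least the reciprocal of the maximal curvature); it does not exclude two far-apart sheets of $\Gamma$ coming within $2\delta_\Gamma$ of each other, which would destroy uniqueness of the nearest point. Ruling that out needs a separate compactness argument: if no uniform $\delta_\Gamma$ worked, one could pick $x_k$ admitting two distinct nearest points with $\d(x_k)\to0$, extract convergent subsequences, and contradict the local uniqueness already furnished by the inverse function theorem.
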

\subsection{Affine mesh $\tauh$} 
Let $\tauh$ be a polyhedral mesh of $\Omega$ made of simplices of dimension $d$ (triangles or tetrahedra), it is chosen as quasi-uniform and henceforth shape-regular (see  \cite[definition 4.4.13]{quasi-unif}). Define the mesh size~$h:= \max\{\mathrm{diam}(T); T \in \tauh \}$, where $\mathrm{diam}(T)$ is the diameter of $T$. The mesh domain is denoted by $\omh1:= \cup_{T\in  \tauh}T$. Its boundary denoted by $\Gamma_h^{(1)} :=\partial \omh1$ is composed of $(d-1)$-dimensional simplices that form a  mesh of $\Gamma = \partial \Omega$. The vertices of $\Gamma_h^{(1)}$ are assumed to lie on $\Gamma$. For $T \in \tauh$, we define an affine function that maps the reference element onto~$T$,~$\ft : \tref \to T:=\ft(\tref).$
For more details, see \cite[page 239]{ciaravtransf}. 
\subsection{Exact mesh $\taue$}
In this section, is recalled the definition of an exact transformation $\fte$ defined in the work of Elliott {\it et al.} in~\cite{elliott} in 2013, which is used throughout this work. For the sake of completeness, one needs to recall that in the 1970's, Scott gave an explicit construction of an exact triangulation in two dimensions in \cite{scott-2}. Later on, it was generalised by Lenoir in \cite{Lenoir1986}. The present definition of an exact transformation~$\fte$ combines the definitions found in \cite{Lenoir1986,scott-2,nedelec,Bernardi1989} with the orthogonal projection onto the domain's boundary $b$, defined in Proposition \ref{tub_neigh_orth_proj_prop}, first used to this aim by Dubois in \cite{dubois} in the 1990's. 

\medskip

Under the assumption of a quasi-uniformal mesh and for a sufficiently small $h$, a mesh element~$T\in \tauh$ cannot have $d+1$ vertices on the boundary $\Gamma$.  In Definition \ref{appdef:sigma-lambdaetoile-haty}, are given essential key elements for the construction of $\fte$.
\begin{definition}
\label{appdef:sigma-lambdaetoile-haty}
    Let $T\in\tauh$ be a non-internal element (having at least $2$ vertices on the boundary). Denote~$v_i = \ft(\hatv_i)$ as its vertices, where $\hatv_i$ are the vertices of~$\tref$. We define $\varepsilon_i=1$  if~$v_i\in  \Gamma$ and~$\varepsilon_i=0$ otherwise.
    To $\hatx\in \tref$ is associated its barycentric coordinates~$\lambda_i$ associated to the vertices $\hatv_i$ of~$\tref$ and $\lambdaetoile (\hatx):= \sum_{i=1}^{d+1} \varepsilon_i \lambda_i$ (shortly denoted by $\lambdaetoile$). Finally, we define  $\hat{\sigma} : = \{ \hatx \in \tref; \lambdaetoile(\hatx) = 0 \}$ and the function~$
    \haty := \dfrac{1}{\lambdaetoile}\sum_{i=1}^{d+1} \varepsilon_i \lambda_i\hatv_i\in\tref$, which is well defined on $\trefminissigma$.
\end{definition}
\begin{definition}
We denote~$\taue$ the mesh consisting of all exact elements $\te=\fte(\tref)$, where $\fte = \ft$ for all internal elements of $\tauh$, as for the case of non-internal elements $\fte$ is given by, 
\begin{equation*}
\fonction{\fte}{\tref  }{\te :=\fte( \tref) }{  \hatx}{\displaystyle  \fte( \hatx) := \left\lbrace 
\begin{array}{ll}
 x & {\rm if } \, \hatx \in \hatsigma, \\
     x+(\lambdaetoile)^{r+2} ( b(y) - y) &  {\rm if } \, \hatx \in \trefminissigma ,
\end{array}
\right.}
\end{equation*}
with $x = \ft( \hatx)$ and $y = \ft( \haty)$ and for an integer $r \ge 1$, the value of which is discussed in the following remark. 
\end{definition}

\begin{remark}[$\fte$ regularity]
It has been proven in \cite{elliott} that the exact transformation $\fte$ is a $\c1$-diffeomorphism and $C^{r+1}$ regular on $\tref$. Indeed for any $\hatx \in \trefminissigma$, the function $\fte(\hatx)=x+(\lambdaetoile)^{s} ( b(y) - y)$ has an exponent $s=r+2$ inherited from \cite{elliott}: this exponent value guaranties the~$\c{r+1}$ regularity of the function $\fte$.
\end{remark}

\section{The lift transformation definition}
\label{appendix:lift}
We recall that the idea of lifting a function from the discrete domain onto the continuous one was already treated and discussed in many articles dating back to the 1970's, like \cite{nedelec,scott,Lenoir1986,Bernardi1989}. The key ingredient is a well defined lift transformation going from the mesh domain onto the physical domain $\Omega$. \medskip

We recall the lift transformation $\Ghr$, which was defined in \cite[\S 4]{art-joyce-1}. Following the notations given in Definition \ref{appdef:sigma-lambdaetoile-haty}, we introduce the transformation $\Ghr:\omhr \rightarrow \Omega$ given piecewise for all~$\tr\in \taur$ by,
\begin{equation*}
    {\Ghr}_{|_{\tr}} := \ftre \circ ({\ftr})^{-1},
\end{equation*}
where the transformation $\ftre$ is given as follows, for $\hatx \in \tref$,
\begin{equation*}
  \ftre( \hatx) \!:=\! \left\lbrace 
\begin{array}{ll}
 \!\!   x & {\rm if } \, \hatx \in \hatsigma \\
  \!\!   x+(\lambdaetoile)^{r+2} ( b(y) - y) &  {\rm if } \, \hatx \in \trefminissigma
\end{array}
\right.,
\end{equation*}
with $ x := \ftr( \hatx)$ and $y := \ftr( \haty)$, where $\ftr$ is the polynomial transformation defined in Section~\ref{sec:mesh-lift}. Notice that this definition implies that~${\Ghr}_{|_{\tr}} = id_{|_{\tr}}$, for any internal mesh element~$\tr \in \taur$. Note that, by construction, $\Ghr$ is globally continuous and piecewise differentiable on each mesh element. \medskip

For the sake of completeness, in the following example we illustrate the transformation $\Ghr$ on a quadratic mesh ($r=2$).
\begin{ex} 
We display in this example the effect of $\Ghr$ on the elements of the curved mesh~$\taur$, for $r=2$. In Figure \ref{fig:Gh2}, we display the transformation $\Ghdeux$ that maps a curved element $\tdeux \in \taudeux$ into an exact element $\te$. 
\begin{figure}[h]
\centering
\begin{tikzpicture}

\draw[red] (0.4,0.8) node[above] {$\tdeux$};
\draw[red] (-1,0) arc (180:123:3);


\draw (-1,0) node  {$\bullet$};
\draw (0.36,2.5) node  {$\bullet$};
\draw (2,0) node  {$\bullet$}; 
\draw (0.36,2.56) -- (2,0);
\draw (2,0) -- (-1,0);


\draw[red] (0.5,0) node  {$\bullet$};
\draw[red] (-0.6,1.5) node  {$\bullet$}; 
\draw[red] (1.15,1.3) node  {$\bullet$};


\draw plot [domain=-0.3:1.5] (\x-1,-1.786*\x^2+4.3*\x);

\draw (-1.7,-1) node {$\Gamma$};


\draw [->] (2.6,1) -- (5.2,1);
\draw (4,1) node[above] {$\Ghdeux= \textcolor{red}{\ftre} \circ \textcolor{red}{(\ftdeux)^{-1}} $};



\draw plot [domain=-0.3:1.5] (\x+6,-1.786*\x^2+4.3*\x);

\draw (5.3,-1) node {$\Gamma$};


\draw[red] (7.4,0.8) node[above] {$\te$};


\draw (6,0) node  {$\bullet$};
\draw (7.36,2.5) node  {$\bullet$};
\draw (9,0) node  {$\bullet$}; 
\draw (7.36,2.56) -- (9,0);
\draw (9,0) -- (6,0);


\draw[red] (7.5,0) node  {$\bullet$};
\draw[red] (6.4,1.5) node  {$\bullet$}; 
\draw[red] (8.15,1.3) node  {$\bullet$};
\draw[red] [->] (2.5,-2.5) -- (0.9,-1);
\draw[red] (0.5,-2.2) node[above] {${\ftdeux} $};
\draw[red] [->] (5.5,-2.5) -- (6.7,-1);
\draw[red] (7.3,-2.2) node[above] {${\ftre}$ }; 
\draw[red] (4.4,-3.3)  node[left] {$\tref$};
\draw (3.5,-2) node  {$\bullet$};
\draw (3.5,-4) node  {$\bullet$}; 
\draw (5.5,-4) node  {$\bullet$};
\draw[red] (3.5,-3) node  {$\bullet$};
\draw[red] (4.5,-4) node  {$\bullet$}; 
\draw[red] (4.5,-3) node  {$\bullet$};
\draw (3.5,-2) -- (3.5,-4);
\draw (5.5,-4) -- (3.5,-2);
\draw (5.5,-4) -- (3.5,-4);
\end{tikzpicture}
\caption{Visualisation of $G_h^{(2)}: \tdeux \to \te$ in a 2D case, in the quadratic case ($r=2$).}
\label{fig:Gh2}
\end{figure}
\end{ex}

\bibliographystyle{abbrv}
\bibliography{biblio}

\end{document}